\documentclass[]{article}
\usepackage{amsmath, amsfonts, fullpage, xspace, color, amsthm, hyperref, mathtools, caption}
\usepackage{tikz, pgfplots, tikz-3dplot}
\usetikzlibrary{calc, fit, arrows, positioning, arrows.meta}

\newtheorem{lemma}{Lemma}[section]

\newtheorem{cor}[lemma]{Corollary}
\newtheorem{theorem}[lemma]{Theorem}
\newtheorem{prop}[lemma]{Proposition}

\newtheorem{hyp}{Hypothesis}

\theoremstyle{definition}
\newtheorem{defn}[lemma]{Definition}

\newtheorem*{example*}{Example}

\numberwithin{equation}{section}

\newcommand{\re}{\mbox{Re}\,\xspace}

\title{Equivariant Pyragas control of discrete waves}

\date{}
\author{Babette de Wolff\footnote{Vrije Universiteit Amsterdam, Department of Mathematics, \href{mailto:b.wolff@vu.nl}{b.wolff@vu.nl}}}

\begin{document}
\maketitle

\begin{abstract}
Equivariant Pyragas control is a delayed feedback method that aims to stabilize spatio-temporal patterns in systems with symmetries. In this article, we apply equivariant Pyragas control to discrete waves, which are periodic solutions 
that have 
a finite number of spatio-temporal symmetries. We prove sufficient conditions under which a discrete wave can be stabilized via equivariant Pyragas control. The result is applicable to a broad class of discrete waves, including discrete waves that are 
far away from a bifurcation point.  
Key ingredients of the proof are 
an adaptation of Floquet theory to systems with symmetries, 
and the use of characteristic matrix functions to reduce the infinite dimensional eigenvalue problem to a one dimensional zero finding problem. \\

\noindent \textbf{AMS Subject classification:} 34K20, 34K35, 93C23, 37C81. 

\noindent \textbf{Key words:} delayed feedback control, equivariant Pyragas control, spatio-temporal patterns. 
\end{abstract}

\section{Introduction}
In \cite{Pyragas92}, Kestitutis Pyragas introduced a delayed feedback control scheme (now known as `Pyragas control') that aims to stabilize periodic motion. Pyragas considers a system without feedback that is described by an ordinary differential equation (ODE)
\begin{equation}
\dot{x}(t) = f(x(t)), \qquad t \geq 0 \label{eq:ode}
\end{equation}
with $f: \mathbb{R}^N \to \mathbb{R}^N$. Pyragas then introduces a feedback term that measures the difference between the current state and the state time $p$ ago, and then feeds this difference (multiplied by a matrix) back into the system. Concretely, the system with feedback control becomes
\begin{equation}\label{eq:pyragas}
\dot{x}(t) = f(x(t)) + B \left[x(t)-x(t-p) \right]
\end{equation}
with \emph{gain matrix} $B \in \mathbb{R}^{N \times N}$. For a periodic solution with period $p$, the difference between the current state and the state time $p$ ago is zero. Hence a $p$-periodic solution of the original system \eqref{eq:ode} is also a solution of the feedback system \eqref{eq:pyragas}. However, the global dynamics of the systems \eqref{eq:ode} and \eqref{eq:pyragas} are radically different, 
and we can try to choose the matrix $B \in \mathbb{R}^{N \times N}$ in such a way that an unstable $p$-periodic solution of \eqref{eq:ode} is a stable solution of \eqref{eq:pyragas}. 

If the original system \eqref{eq:ode} has built-in symmetries, 
its periodic solutions can satisfy additional spatio-temporal relations. In this case, one can adapt the Pyragas control scheme so that it vanishes on solutions with a prescriped spatio-temporal relation. 
As in \cite{Fiedler10}, we write the feedback system as
\begin{equation}\label{eq:pyr equiv}
\dot{x}(t) = f(x(t)) + B \left[x(t) - hx(t-\theta_h) \right] 
\end{equation}
with time delay $\theta_h > 0$ and $h \in \mathbb{R}^{N \times N}$ a linear, spatial transformation. The control term now feeds back the difference between the current state and a spatio-temporal transformation of the state, and vanishes on \emph{spatio-temporal patterns} of the form $hx(t) = x(t+\theta_h)$. The feedback scheme \eqref{eq:pyr equiv} has the advantage that it is able to select a prescribed spatio-temporal pattern amongst a family of periodic solutions with the same period, and in such situations can indeed be more succesful in stabilizing a specific pattern than Pyragas control \cite{Schneider16, Postelthwaite13}. Since symmetries of the uncontrolled system \eqref{eq:ode} are often described in terms of equivariance relations, we refer to the control scheme \eqref{eq:pyr equiv} as \textbf{equivariant Pyragas control}. 

Implementation of Pyragas control requires knowledge of the period of the targeted solution, but uses no additional information on the original system \eqref{eq:ode}. 
This `model-independence' makes Pyragas control widely applicable; for example in semiconductor lasers \cite{Schiroka06, Schikora11}, 
$CO_2$-lasers \cite{Bielawski} 
and enzymatic reactions \cite{Lekebusch}; the paper \cite{Pyragas92} has currently (October 2022) more than 3100 citations. 
The 
equivariant control scheme \eqref{eq:pyr equiv} has recently been experimentally implemented in networks of chemical oscillators \cite{Schneider21}. 

While Pyragas control has many experimental realizations, proving mathematically rigorous results on Pyragas control is challenging. 
This is mainly because, from a mathematical perspective, 
the controlled systems \eqref{eq:pyragas} and \eqref{eq:pyr equiv} are delay differential equations (DDE) that generate infinite dimensional dynamical systems. 
In order to associate to \eqref{eq:pyragas} (resp. \eqref{eq:pyr equiv}) a well-posed initial value problem, we have to provide a function on the interval $[-p, 0]$ (resp. $[-\theta_h, 0]$) as initial condition. 
So the state space is a function space (that has to be specified more precisely) and the associated dynamical system is infinite dimensional. Although the abstract theory of DDE is well developed \cite{HaleVL93, Diekmann95}, the infinite dimensional nature of DDE is still demanding when we want to perform an explicit stability analysis in concrete examples. 

\medskip

This article is concerned with equivariant Pyragas control of \emph{discrete waves}, which are periodic solutions that have a finite number of spatio-temporal symmetries. 
The main result of this article, Theorem \ref{thm:main result}, provides sufficient conditions under which a discrete wave can be stabilized via equivariant Pyragas control. The sufficient conditions are formulated in terms of eigenvalue properties
of the uncontrolled system, and the result is applicable to a broad class of discrete waves. 

The results in this article in particular apply to periodic orbits that are not close to a bifurcation point and that are `genuine' periodic orbits, i.e. they cannot be transformed to a ring of equilibria of an autonomous system. This is significant, since in the literature so far,
most analytical results on succesful stabilization by (equivariant) Pyragas control either concern periodic orbits that bifurcate from an equilibrium \cite{Hooton19, Fiedler20, VL17, Hooton17} or concern \emph{rotating waves}, i.e. periodic orbits that can be transformed to equilibria of autonomous systems \cite{Purewal14,Fiedler08,Schneider16,Fiedler10, Schneider22}.  
In both these cases, 
the stability analysis simplifies, because we can determine the stability of the periodic orbit by determining the stability of an  equilibrium in an autonomous system. 
These simplifications cannot be made in the setting considered in Theorem \ref{thm:main result} and consequently the stability problem becomes more involved. 

The stability analysis we perform here is based on a combination of equivariant Floquet theory with the theory of characteristic matrix functions. In systems without symmetry, we determine the stability of a $p$-periodic solution using the monodromy operator, which involves solving the linearized equation over a time step $p$. We show that in equivariant settings, we can work with the \emph{twisted monodromy operator}, which involves solving the linearized equation over only a fraction of the period. 
We then prove that the twisted monodromy operator has a \emph{characteristic matrix function}, a concept that was recently introduced in \cite{KaashoekVL22}. A characteristic matrix function captures the spectrum of a bounded linear operator in a matrix valued function, and using this concept we rigorously prove that the eigenvalues of the twisted monodromy operator are zeroes of a scalar valued function. 
This translates the infinite dimensional eigenvalue problem of the twisted monodromy operator to a one dimensional zero finding problem, 
which means a significant dimension reduction.
As the final step, we analyze the scalar valued function and 
prove sufficient conditions under which the control scheme \eqref{eq:pyr equiv} succesfully stabilizes a discrete wave. 

\medskip

This article is structured as follows. In Section \ref{sec:setting}, 
we state the main result (Theorem \ref{thm:main result}) in mathematically precise form, after having introduced the necessary terminology. In Section \ref{sec:equivariance}, we describe the symmetry relations of the controlled system \eqref{eq:pyr equiv}; we then prove that the stability of discrete wave solutions of \eqref{eq:pyr equiv} is determined by the spectrum of the twisted monodromy operator. 
In Section \ref{sec:cm}, we introduce the concept of a characteristic matrix function; and show that the eigenvalues of the twisted monodromy operator can be computed as zeroes of a scalar-valued function.
We analyze this scalar-valued function in Section \ref{sec:eigenvalues} and subsequently prove Theorem \ref{thm:main result}. 

\subsection*{Acknowledgements}
The contents of this article are based upon contents of the authors doctoral thesis \cite{proefschrift}, written at the Freie Universit{\"a}t Berlin under the supervision of Bernold Fiedler. The author is grateful to Bernold Fiedler and Sjoerd Verduyn Lunel for useful discussions and encouragment; and to Jia-Yuan Dai, Bob Rink and Isabelle Schneider for comments on earlier versions. 

\section{Setting and statement of the main result} \label{sec:setting}
Throughout the rest of this article, we assume that the ODE \eqref{eq:ode} is equivariant with respect to a compact Lie group $\Gamma$. This means that there exists a group homomorphism
\[ \rho: \Gamma \to GL(N, \mathbb{R}) \]
(called a \textbf{representation} of $\Gamma$) such that
\begin{equation} \label{eq:representation}
f(\rho(\gamma)x) = \rho(\gamma) f(x)
\end{equation}
for all $x \in \mathbb{R}^N$ and all $\gamma \in \Gamma$. If now $x(t)$ is a solution of \eqref{eq:ode} and $\gamma$ is an element of $\Gamma$, then $\rho(\gamma)x(t)$ is a solution of \eqref{eq:ode} as well. So the group $\Gamma$ (or rather the group $\{ \rho(\gamma) \mid \gamma \in \Gamma \}$) is indeed a group of symmetries of the solutions of \eqref{eq:ode}. In many examples, symmetries of an ODE can be effectively described using compact Lie groups; see for example the monographs \cite{Golubitsky88,Golubitsky02}.

A compact Lie group $\Gamma$ always has a orthogonal representation, i.e. there always exists a group homomorphism
\[ \rho: \Gamma \to O(N), \]
cf. \cite[p. 31]{Golubitsky88}. In the rest of this article, we directly view $\Gamma$ as a subgroup of the orthogonal group $O(N)$, instead of viewing it as an abstract compact Lie group with an orthogonal representation. Consequently we also supress the representation in the notation, e.g. we now write the equivariance condition \eqref{eq:representation} as
\[ f(\gamma x) = \gamma f(x)\]
with $\gamma \in \Gamma \subseteq O(N)$ and $x \in \mathbb{R}^N$. 

\medskip

The symmetries of the ODE \eqref{eq:ode} naturally induce two symmetry groups on a given periodic solution. Suppose that $x_\ast$ is a periodic solution of \eqref{eq:ode} with minimal period $p > 0$; denote by $\mathcal{O} = \{x_\ast(t) \mid t \in \mathbb{R} \}$ its orbit. Then the group
\[ K : = \{ \gamma \in \Gamma \mid \gamma x_\ast(0) = x_\ast(0) \} \]
leaves the initial condition $x_\ast(0)$ invariant, and the group 
\[ H := \{ \gamma \in \Gamma \mid \gamma \mathcal{O} = \mathcal{O} \} \]
leaves the orbit $\mathcal{O}$ invariant; cf. \cite{Fiedler88}. If $k \in K$, then $kx_\ast(t)$ and $x_\ast(t)$ are two solutions of \eqref{eq:ode} with the same intial condition, and hence
\[ k x_\ast(t) = x_\ast(t) \]
for all $t \in \mathbb{R}$. So elements of the group $K$ leave the orbit of $x_\ast$ fixed pointwise and hence we refer to the group $K$ as the group of \textbf{spatial symmetries} of $x_\ast$. For any an element $h \in H$, there exists a time-shift $\theta_h \in [0,p)$ such that $hx_\ast(0) = x_\ast(\theta_h)$. But then $hx_\ast(t)$ and $x_\ast(t+\theta_h)$ are both solutions of \eqref{eq:ode} with the same initial condition, and hence
\begin{equation} \label{eq:pattern}
hx_\ast(t) = x_\ast(t+\theta_h)
\end{equation}
for all $t \in \mathbb{R}$. 
So every element of $H$ induces a spatio-temporal relation of the form \eqref{eq:pattern} on $x_\ast$, and hence we refer to the group $H$ as the group of \textbf{spatio-temporal symmetries} of $x_\ast$.

If $h, g \in H$ are two spatio-temporal symmetries of $x_\ast$, then $ g  h x_\ast(t) = x_\ast(t+\theta_{h} + \theta_g )$
and hence $\theta_{hg} = \theta_h + \theta_g \mod p$. Thus the map 
\begin{align*}
H &\to S^1 \cong  \mathbb{R}/ p \, \mathbb{Z} \\
 h &\mapsto \theta_h
\end{align*}
is a group homomorphism. Since the group $K$ is exactly the kernel of the map $H \ni h \mapsto \theta_h$, it is in particular a normal subgroup of $H$, and the quotient group $H/K$ is a subgroup of $S^1$. This implies  that
\begin{align*}
\begin{cases}
H/K &\cong \mathbb{Z}_n \qquad \mbox{for some } n \in \mathbb{N}, \mbox{ or} \\
H/K &\cong S^1,
\end{cases}
\end{align*} 
where $\mathbb{Z}_n$ denotes the cyclic group of order $n$. If $H/K \simeq S^1$, the periodic solution $x_\ast$ is called a \textbf{rotating wave}; if $H/K \simeq \mathbb{Z}_n$ the periodic solution $x_\ast$ is often called a \textbf{discrete wave}, cf. \cite{Fiedler88}. For discrete waves, the time-shift $\theta_h$ associated to spatio-temporal symmetry $h \in H$ is always rationally related to the minimal period $p$ of the orbit. Indeed, if $h \in H$ and $H/K \simeq \mathbb{Z}_n$, then necessarily $h^n \in K$. 
So $n \theta_h = 0 \mod p$ and therefore there exists an integer $m \in \{1, \ldots, n\}$ such that 
\begin{equation} \label{eq:rational relation}
 \theta_h  = \frac{m}{n} p, 
\end{equation}
i.e. $\theta_h$ and $p$ are rationally related.

Throughout the rest of this article, we focus on the stabilization of discrete waves. For future reference, we collect the relevant assumptions on the ODE \eqref{eq:ode} in a seperate hypothesis. 

\begin{hyp} \label{hyp:theorem} \hfill
\begin{enumerate}
\item $f: \mathbb{R}^N \to \mathbb{R}^N$ is a $C^2$ function;
\item system \eqref{eq:ode} has a periodic solution $x_\ast$ with \emph{minimal} period $p > 0$;
\item system \eqref{eq:ode} is equivariant with respect to a compact symmetry group $\Gamma \subseteq O(N)$, i.e. 
\begin{equation} \label{eq:equivariance}
f(\gamma x) = \gamma f(x) \qquad \mbox{for all } x \in \mathbb{R}^N \mbox{ and } \gamma \in \Gamma.
\end{equation}
\item The periodic solution $x_\ast$ is a discrete wave, i.e. $H/K \simeq \mathbb{Z}_n$ for some $n \in \mathbb{N}$. 
\end{enumerate}
\end{hyp}

\medskip

\noindent To determine whether the $p$-periodic discrete wave $x_\ast$ is a stable solution of \eqref{eq:ode}, we consider the linearized equation 
\begin{equation} \label{eq:linear ode}
\dot{y}(t) = f'(x_\ast(t)) y(t),
\end{equation}
which is non-autonomous and $p$-periodic in its time argument. We denote by $Y(t) \in \mathbb{R}^{N \times N}$ the \textbf{fundamental solution} of \eqref{eq:linear ode} with $Y(0) = I$, i.e. $Y(t)$ is the matrix-valued solution of the initial value problem 
\[ \frac{d}{dt} Y(t) = f'(x_\ast(t))Y(t), \qquad Y(0) = I. \]
Floquet theory implies that the eigenvalues of the \textbf{monodromy operator} $Y(p) \in \mathbb{R}^{N \times N}$ determine whether $x_\ast$ is a stable solution of \eqref{eq:ode}. The equivariance assumption in Hypothesis \ref{hyp:theorem} allows us to refine Floquet theory for discrete waves. We do this in detail in Section \ref{sec:equivariance}; for now we just mention that in the stability analysis of discrete waves, the operator 
\begin{equation} \label{eq:twisted mon ode}
 Y_h: \mathbb{R}^N \to \mathbb{R}^N, \qquad Y_h  = h^{-1} Y(\theta_h) \quad \mbox{with } h \in H,
\end{equation}
plays an important role; we call the operator \eqref{eq:twisted mon ode} the \textbf{twisted monodromy operator (associated to h)}.
The twisted monodromy operator $Y_h$ always has an eigenvalue $1 \in \mathbb{C}$, which we call the \textbf{trivial eigenvalue}. This is because differentiating the relation $\dot{x}_\ast(t) = f(x_\ast(t))$ with respect to time implies that $\dot{x}_\ast(t)$ is a solution of the linearized equation \eqref{eq:linear ode}. So $Y(t)\dot{x}_\ast(0) = \dot{x}_\ast(t)$
and together with \eqref{eq:pattern} this implies that 
\[ h^{-1} Y(\theta_h)\dot{x}_\ast(0) = h ^{-1} x_\ast(\theta_h) = \dot{x}_\ast(0). \]
In Section \ref{sec:equivariance}, we additionally prove that if 
$Y_h$ has an eigenvalue $\left| \mu \right| > 1$, then $x_\ast$ is an unstable solution of \eqref{eq:ode}. 

\medskip

With these preparations, we are now ready to state this article's main result. 

\begin{theorem} \label{thm:main result}
Consider the ODE \eqref{eq:ode} satisfying Hypothesis \ref{hyp:theorem}. 
Assume that the discrete wave $x_\ast$ has a spatio-temporal symmetry $h \in H$ such that the twisted monodromy operator $Y_h$ defined in \eqref{eq:twisted mon ode} has the following properties:
\begin{enumerate}
\item The eigenvalue $1 \in \sigma(Y_h)$ is algebraically simple and $Y_h$ has no other eigenvalues on the unit circle;
\item If $\mu \in \sigma(Y_h)$ and $\left| \mu \right| > 1$, then 
\begin{equation}
- e^2 < \mu < -1. 
\end{equation}
\end{enumerate}
Then there exists an open interval $I \subseteq (-\infty, 0)$ such that for $b \in I$, $x_\ast$ is a stable solution of 
\begin{equation} \label{eq:control thm}
\dot{x}(t) = f(x(t)) + b \left[x(t) - h x(t-\theta_h)\right]. 
\end{equation}
\end{theorem}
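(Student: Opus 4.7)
The plan is to linearize the controlled delay equation \eqref{eq:control thm} around $x_\ast$, apply the equivariant Floquet theory developed in Section \ref{sec:equivariance} to reduce stability to the spectrum of a twisted monodromy operator $U_h(b)$ for the DDE (which advances time by only $\theta_h$, not $p$), and then use the characteristic matrix machinery of Section \ref{sec:cm} to rewrite this infinite-dimensional eigenvalue problem as a scalar transcendental equation. The decisive simplification is that the gain matrix is $bI$, a scalar multiple of the identity, so it commutes with everything.

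Concretely, the linearization at $x_\ast$ reads
\[
\dot{y}(t) = f'(x_\ast(t)) y(t) + b\bigl[y(t) - h y(t-\theta_h)\bigr].
\]
Inserting the twisted Floquet ansatz $y(t+\theta_h) = \mu\, h y(t)$ converts $h y(t-\theta_h)$ into $\mu^{-1} y(t)$, so the DDE collapses to the ODE $\dot{y}(t) = \bigl[f'(x_\ast(t)) + b(1-\mu^{-1})I\bigr]y(t)$. Its fundamental solution on $[0,\theta_h]$ is $e^{b\theta_h(1-\mu^{-1})}Y(\theta_h)$; imposing the twist at $t=0$ gives $e^{b\theta_h(1-\mu^{-1})}Y_h v = \mu v$. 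So $\mu \in \sigma(U_h(b))$ if and only if there is $\mu_0 \in \sigma(Y_h)$ with
\[
\mu = \mu_0 \exp\!\bigl(b\theta_h(1-\mu^{-1})\bigr).
\]
The characteristic matrix formalism guarantees that this scalar equation captures the full spectrum of $U_h(b)$ with correct algebraic multiplicities.

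It remains to analyze this equation as $\mu_0$ ranges over $\sigma(Y_h)$. For $\mu_0 = 1$, $\mu = 1$ is always a solution; the algebraic simplicity hypothesis and the implicit function theorem show it persists as a simple root with no other roots on the unit circle for small $|b|$. For $\mu_0 \in \sigma(Y_h)$ with $|\mu_0| < 1$, continuity in $b$ keeps the perturbed roots strictly inside the unit disc. The decisive case is $\mu_0 \in (-e^2, -1)$: writing $a := b\theta_h$ and $\Psi(\mu) := \mu\, e^{-a(1-\mu^{-1})}$, one studies $\Psi(\mu) = \mu_0$ for real $\mu < 0$. A direct computation shows that on $(-1, 0)$, $|\Psi|$ attains its minimum $|a|e^{|a|+1}$ at $\mu = a$ (provided $a \in (-1, 0)$) and satisfies $|\Psi(-1)| = e^{2|a|}$, while on $(-\infty, -1)$ the function $|\Psi|$ decreases monotonically from $+\infty$ to $e^{2|a|}$. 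Hence, as $|a|$ grows from $0$, the real unstable root, initially located at $\mu_0 \in (-\infty, -1)$, migrates toward $-1$ and crosses into $(-1, 0)$ exactly when $|a| > \tfrac{1}{2}\ln|\mu_0|$. The constraint $a \in (-1, 0)$ is compatible with this precisely when $|\mu_0| < e^2$, which is the origin of the bound in the hypothesis. Choosing $b$ so that $b\theta_h$ lies in the nonempty common interval $\bigl(-1,\, -\tfrac{1}{2}\max_{\mu_0}\ln|\mu_0|\bigr)$, where $\mu_0$ ranges over the finitely many elements of $\sigma(Y_h)\cap(-e^2, -1)$, yields the interval $I$.

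The main obstacle is the countably many complex roots of the transcendental equation that emerge ``from infinity'' once $b \neq 0$ and are not perturbations of any $\mu_0 \in \sigma(Y_h)$. One must rule out that any of them crosses the unit circle on $I$. This is handled by the asymptotic estimate $\mu \sim \mu_0 e^{b\theta_h}$ as $|\mu| \to \infty$, which is uniform in $b$ on compact intervals, combined with a Rouch\'e-type argument on an annulus $\{1 \leq |\mu| \leq R\}$ for a uniform radius $R$. This excludes stray unstable eigenvalues and, together with the real-variable analysis above, completes the proof.
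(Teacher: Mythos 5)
Your reduction to the scalar transcendental equation $\mu = \mu_0\exp(b\theta_h(1-\mu^{-1}))$ is correct and matches the paper's route through the twisted monodromy operator and its characteristic matrix function (Lemma \ref{lem: fundamental solution dde}, Lemma \ref{lem:decomposition operators}, Proposition \ref{prop:spectral stability}, Corollary \ref{cor:cm scalar control}). The gaps are all in the final step, the root analysis, and they are genuine.

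First, for $\mu_0\in(-e^2,-1)$ you analyze $\Psi(\mu)=\mu e^{-a(1-\mu^{-1})}$ only along the negative real axis, which locates the \emph{real} root of $\Psi(\mu)=\mu_0$, and you defer all the complex roots to a ``Rouch\'e-type argument.'' That argument does not work as sketched: the asymptotic you invoke, $\mu\sim\mu_0 e^{b\theta_h}$ as $|\mu|\to\infty$, says that the right-hand side stays bounded, so in fact there are \emph{no} roots with large $|\mu|$; the infinitely many roots accumulate at $\mu=0$, not at infinity. A Rouch\'e comparison on the annulus $\{1\le|\mu|\le R\}$ would need an explicit dominating function and a delicate boundary estimate on $|\mu|=1$ (where $|z\mu_0 e^{a(1-z)}|$, $z=\mu^{-1}$, can be as large as $|\mu_0|>1$), which you do not supply. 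The paper avoids this entirely via Lemma \ref{lem:exponential}: the substitution $\mu=e^{-\lambda}$ is a \emph{bijection} between the nonzero roots of $1-ze^{\alpha}e^{\beta z}=0$ and the roots of the Hayes equation $-z+\alpha+\beta e^{-z}=0$, and then Proposition \ref{prop: chap 11} (the classical stability region in the $(\alpha,\beta)$-plane) accounts for \emph{all} roots at once, real and complex. That exponential correspondence, and the resulting appeal to the Hayes/Bellman--Cooke region, is the key idea your argument is missing.

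Second, your treatment of the cases $\mu_0=1$ and $|\mu_0|<1$ relies on the implicit function theorem and continuity ``for small $|b|$,'' but the interval $I$ you construct requires $|b\theta_h|>\tfrac12\ln|\mu_0^{\max}|$, which is not small when $|\mu_0^{\max}|$ is close to $e^2$. So the small-$|b|$ argument does not cover the $b$ values you actually need. The paper's Corollaries \ref{cor:trivial ce} and \ref{cor:stable ce} show the stronger fact that those factors behave correctly for \emph{every} $b_\ast<0$ (for $\mu_0=1$ because the parameter point stays on the boundary line $R$ of the stability region; for $|\mu_0|<1$ by a direct modulus estimate), and Corollary \ref{cor:unstable ce} provides the nesting $I(\mu_{\ast,1})\subseteq I(\mu_{\ast,2})$ for $-e^2<\mu_{\ast,1}<\mu_{\ast,2}<-1$ that is used to extract a single common interval. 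Your heuristic that the interval is $b\theta_h\in(-1,-\tfrac12\ln|\mu_0|)$ happens to be a subinterval of the true one, but your argument does not establish it.
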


\medskip
\noindent
Theorem \ref{thm:main result} addresses equivariant Pyragas control 
with \textbf{scalar control gain}, i.e. in the control term \[b \left[x(t) - h x(t-\theta_h) \right]\] the factor $b$ is a real number rather than a matrix. 
The fact that Theorem \ref{thm:main result} achieves stabilization with a scalar control gain is remarkable 
since non-equivariant Pyragas control with a scalar control gain fails to stabilize a rather large class of periodic solutions. Indeed, if $x_\ast$ is a $p$-periodic solution of the ODE \eqref{eq:ode}, and the monodromy operator $Y(p)$ has at least one real eigenvalue $\mu > 1$, then $x_\ast$ is an unstable solution of the controlled system 
\begin{equation} \label{eq:scalar pyragas}
\dot{x}(t) = f(x(t)) + b \left[x(t) - x(t-p) \right]
\end{equation}
for every choice of $b \in \mathbb{R}$ \cite{deWolff21}. 
Although Theorem \ref{thm:main result} makes assumptions on the eigenvalues of the twisted monodromy operator $Y_h$, it does not make assumptions on the eigenvalues of the monodromy operator $Y(p)$. 
Given a periodic orbit $x_\ast$, it is possible that its monodromy operator $Y(p)$ has an eigenvalue $\mu > 1$, while its twisted monodromy operator $Y_h$ satisfies the assumptions of Theorem \ref{thm:main result}. 
In this situation, the equivariant control scheme \eqref{eq:control thm} overcomes a limitation to Pyragas control in the sense that the periodic solution can be stabilized using the control \eqref{eq:control thm} but is always an unstable solution of \eqref{eq:scalar pyragas}.  
 
A concrete example of this situation occurs in the Lorenz equation
\begin{equation} \label{eq:lorenz}
\begin{aligned}
\begin{cases}
\dot{x}_1 &= - \sigma x_1 + \sigma x_2, \\
\dot{x}_2 &= -x_1 x_3 + \lambda x_1 - x_2, \\
\dot{x}_3 &= x_1 x_2 - \epsilon x_3,
\end{cases}
\end{aligned}
\end{equation}
with $x_1, x_2, x_3 \in \mathbb{R}$ and 
with parameters $\sigma, \epsilon, \lambda \in \mathbb{R}$.  System \eqref{eq:lorenz} is symmetric with respect to the group $\mathbb{Z}_2 = \{e, 
\gamma\}$, where $e$ is the identity element of the group and we represent $\gamma$ on $\mathbb{R}^3$ as
\[ (x_1, x_2, x_3) \mapsto (-x_1, - x_2, x_3).\]
In \cite{Wulff06}, Wulff and Schebes numerically show that for parameter values $\sigma = 10, \ \epsilon = 8/3$ and $\lambda = 312$ system \eqref{eq:lorenz} has a periodic solution with $H = \mathbb{Z}_2$ and $K = \{e \}$. 
The authors continue this periodic orbit with respect to the parameter $\lambda$, while keeping the parameters $\sigma$ and $\epsilon$ fixed. They find that for $\lambda \approx 312.97$, the periodic orbit undergoes a \emph{flip-pitchfork bifurcation} (a term introduced in \cite{Fiedler88}), which means that there exist parameter values close to $\lambda \approx 312.97$ where the twisted monodromy operator $Y_h$ has an eigenvalue $\mu < -1$ and satisfies the assumptions of Theorem \ref{thm:main result}, whereas the monodromy operator $Y(p)$ has an eigenvalue larger than $1$. So in this parameter regime the periodic orbit can be stabilized using the equivariant control scheme \eqref{eq:control thm}, although it cannot be stabilized using Pyragas control of the form \eqref{eq:scalar pyragas}. 

\section{Equivariant Floquet theory} \label{sec:equivariance}
Throughout this section, we fix a spatio-temporal symmetry $h \in H$ together with a scalar control gain $b \in \mathbb{R}$ and consider the controlled system \eqref{eq:control thm}. We first 
identify the symmetries of system \eqref{eq:control thm}, and then develop an equivariant Floquet theory for its linearization. 

\medskip

We can write the controlled system \eqref{eq:control thm} as
\begin{subequations}
\begin{equation} \label{eq:dde g}
\dot{x}(t) = g(x(t), x(t-\theta_h))
\end{equation}
with $g: \mathbb{R}^N \times \mathbb{R}^N \to \mathbb{R}^N$ defined as 
\begin{equation} \label{eq:g}
g(x, y) = f(x) + b \left[x - h y \right]. 
\end{equation}
\end{subequations}
The function $g$ is equivariant with respect to the group generated by $h$, in the sense that 
\[ g(h^j x, h^j y) = h^j g(x, y)  \]
holds for all $x, y \in \mathbb{R}^N$ and for all $j \in \mathbb{N} \cup \{0 \}$. 
If now $x(t)$ is a solution of the DDE defined by \eqref{eq:dde g}--\eqref{eq:g}, then 
\begin{align*}
\frac{d}{dt} \left( h^j x(t) \right) &= h^j g(x(t), x(t-\theta_h)) \\
& = g(h^j x(t), h^j x(t-\theta_h))
\end{align*}
and hence $h^j x(t)$ is a solution of \eqref{eq:dde g}--\eqref{eq:g} as well. So the DDE defined by \eqref{eq:dde g}--\eqref{eq:g} (or, equivalently, the DDE \eqref{eq:control thm}) is equivariant with respect to the group generated by $h$ in the sense that elements of this group send solutions to solutions. 

In general, not every symmetry of the uncontrolled system \eqref{eq:ode} is also a symmetry of the controlled system \eqref{eq:dde g}--\eqref{eq:g}: if the symmetry group $\Gamma$ of the ODE \eqref{eq:ode} is not abelian, and $\gamma \in \Gamma$ is a group element such that $\gamma h \neq h \gamma$, then $\gamma$ is not a symmetry of \eqref{eq:dde g}--\eqref{eq:g}. So in general the symmetry group of \eqref{eq:dde g}--\eqref{eq:g} contains a subgroup of $\Gamma$ (namely the group generated by $h$) but it need not be the entire group $\Gamma$. 

\medskip

We next 
consider the linearized equation
\begin{subequations}
\begin{equation} \label{eq:linearized control}
\dot{y}(t) = f'(x_\ast(t)) y(t) + b \left[y(t) -  hy(t-\theta_h) \right];
\end{equation}
if we fix $s \in \mathbb{R}$ and supplement \eqref{eq:linearized control} with the initial condition
\begin{equation} \label{eq:initial condition}
y(s+t) = \varphi(t) \qquad \mbox{for } t \in [-\theta_h, 0] \mbox{ and } \varphi \in C\left([-\theta_h, 0], \mathbb{R}^N\right), 
\end{equation}
\end{subequations}
then the system \eqref{eq:linearized control}--\eqref{eq:initial condition} has a unique solution $y(t)$ for $t \geq s$ \cite[Chapter 12]{Diekmann95}. Given a time $t \geq s$, we define the \textbf{history segment} $y_t \in C \left([-\theta_h, 0], \mathbb{R}^N\right)$ at time $t$ as 
$y_t(\vartheta) = y(t+\vartheta), \ \vartheta \in [-\theta_h, 0]$. 
We then associate to \eqref{eq:linearized control}--\eqref{eq:initial condition} a two-parameter system of operators 
\begin{equation} \label{eq:two parameter system}
U(t, s): C\left([-\theta_h, 0], \mathbb{R}^N\right) \to C\left([-\theta_h, 0], \mathbb{R}^N\right), \qquad t \geq s
\end{equation}
with the property that $y_t = U(t, s) \varphi$ is the solution of \eqref{eq:linearized control} with initial condition \eqref{eq:initial condition} at time $s$. We refer to 
\eqref{eq:two parameter system}
as the \textbf{family of solution operators}
of \eqref{eq:linearized control}. Since the non-autonomous system \eqref{eq:linearized control} is $p$-periodic in its time argument, standard Floquet theory for DDE implies that 
\begin{equation} \label{eq:floquet}
U(t+p, s+p) = U(t, s) 
\end{equation}
for all $t \geq s$; see \cite[Chapter 13]{Diekmann95}. The next lemma 
shows that the symmetry relations on \eqref{eq:control thm} induce additional spatio-temporal relations on the family of solution operators $U(t, s)$.  

\begin{lemma} \label{lem: fundamental solution dde} 
Consider the ODE \eqref{eq:ode} satisfying Hypothesis \ref{hyp:theorem}. For a fixed spatio-temporal symmetry $h \in H$ and scalar control gain $b \in \mathbb{R}$,
let $U(t, s), \ t \geq s$, be the family of solution operators associated to the linearized system \eqref{eq:linearized control};  let $n \in \mathbb{N}$ be such that $H/K \simeq \mathbb{Z}_n$. 
Then 
\begin{subequations}
\begin{equation}\label{eq: U h}
 h U(t, s) = U(t+\theta_h, s+\theta_h) h
\end{equation}
and 
\begin{equation} \label{eq: U k}
h^n U(t, s) = U(t, s) h^n
\end{equation}
\end{subequations}
for all $t \geq s$. 
\end{lemma}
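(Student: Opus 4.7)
The plan is to derive both identities by transforming a solution of the linearized equation \eqref{eq:linearized control} into another solution, and then translating this fact on solutions into an identity on the family of solution operators acting on history segments.

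The essential ingredient is a pointwise relation between $f'(x_\ast(t))$ and the symmetry $h$. First I would differentiate the equivariance identity $f(\gamma x) = \gamma f(x)$ to obtain $f'(\gamma x)\gamma = \gamma f'(x)$ for every $\gamma \in \Gamma$. Combined with the spatio-temporal relation $x_\ast(t+\theta_h) = h x_\ast(t)$ from \eqref{eq:pattern}, this yields
\[
f'(x_\ast(t+\theta_h)) = h\, f'(x_\ast(t))\, h^{-1}\qquad \text{for all } t\in\mathbb{R},
\]
while for the spatial symmetry $h^n\in K$ (which fixes $x_\ast$ pointwise) the same computation gives the commutation relation $h^n f'(x_\ast(t)) = f'(x_\ast(t)) h^n$.

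For \eqref{eq: U h}, given a solution $y$ of \eqref{eq:linearized control} I would set $\tilde y(t) := h\, y(t-\theta_h)$ and verify directly, using the displayed identity above together with $h\cdot h = h\cdot h$, that $\tilde y$ again solves \eqref{eq:linearized control}. If $y_s = \varphi$, then for $\vartheta\in[-\theta_h,0]$ one has $\tilde y(s+\theta_h+\vartheta) = h\,y(s+\vartheta) = (h\varphi)(\vartheta)$, where $h$ acts pointwise on $C([-\theta_h,0],\mathbb{R}^N)$. Hence $\tilde y_{s+\theta_h} = h\varphi$, and applying the solution family at time $t+\theta_h$ gives $\tilde y_{t+\theta_h} = U(t+\theta_h,s+\theta_h)(h\varphi)$. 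On the other hand the definition of $\tilde y$ yields $\tilde y_{t+\theta_h} = h\, y_t = h\, U(t,s)\varphi$, and comparing the two expressions proves \eqref{eq: U h}.

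For \eqref{eq: U k}, I would similarly set $z(t) := h^n y(t)$. Using the commutation relation $[h^n, f'(x_\ast(t))] = 0$ and the fact that $h^n$ commutes with $h$, a one-line computation shows that $z$ solves \eqref{eq:linearized control} with initial segment $z_s = h^n\varphi$, so $z_t = U(t,s)(h^n\varphi)$, while on the other hand $z_t = h^n U(t,s)\varphi$, yielding \eqref{eq: U k}.

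I expect no serious obstacle: both identities are morally consequences of equivariance, and the only delicate point is bookkeeping the history segment convention carefully so that the action of $h$ on $C([-\theta_h,0],\mathbb{R}^N)$ matches the pointwise action on $\mathbb{R}^N$ used in the transformations $\tilde y(t) = h\,y(t-\theta_h)$ and $z(t) = h^n y(t)$. The time-shift argument for \eqref{eq: U h} is the mildly subtler of the two, since it requires that the non-autonomous coefficient $f'(x_\ast(\cdot))$ is itself shifted by exactly $\theta_h$ under conjugation by $h$; this is precisely what the displayed identity encodes.
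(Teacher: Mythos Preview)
Your argument for \eqref{eq: U h} is correct and essentially identical to the paper's: both differentiate the equivariance relation, combine it with the spatio-temporal pattern to obtain $f'(x_\ast(t+\theta_h)) = h f'(x_\ast(t)) h^{-1}$, and then push a solution through the symmetry $h$ (you reparametrize via $\tilde y(t) = h\,y(t-\theta_h)$, the paper computes $\frac{d}{dt}(hy(t))$ directly and identifies the result as a solution with shifted initial time; these are two ways of phrasing the same computation). The placeholder ``$h\cdot h = h\cdot h$'' in your sketch is presumably a reminder that the control term $b[y(t)-hy(t-\theta_h)]$ transforms correctly under left multiplication by $h$, which is immediate.

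For \eqref{eq: U k} you take a genuinely different route. You exploit $h^n \in K$ to get the pointwise commutation $h^n f'(x_\ast(t)) = f'(x_\ast(t)) h^n$ and then argue that $z(t) = h^n y(t)$ solves the same equation with the same time variable, yielding $h^n U(t,s) = U(t,s) h^n$ in one step. The paper instead iterates \eqref{eq: U h} $n$ times to obtain $h^n U(t,s) = U(t+n\theta_h, s+n\theta_h) h^n$, then uses the rational relation $n\theta_h = mp$ together with the Floquet periodicity $U(t+p,s+p) = U(t,s)$ to collapse the time shift. Your argument is shorter and avoids invoking Floquet theory, at the cost of using the slightly stronger input $h^n \in K$ (which is established in Section~\ref{sec:setting} anyway). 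The paper's approach, on the other hand, makes explicit how \eqref{eq: U k} is a consequence of \eqref{eq: U h} together with periodicity, which is conceptually useful for the subsequent Lemma~\ref{lem:decomposition operators}. Both are valid.
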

\begin{proof} 
The equivariance relation \eqref{eq:equivariance} implies that 
\begin{equation} \label{eq:commute}
h f(x) = f(hx) \qquad \mbox{for all } x \in \mathbb{R}^N
\end{equation}
and differentiating \eqref{eq:commute} with respect to $x$ yields 
\[  h f'(x) = f'(h x) h \qquad  \mbox{for all } x \in \mathbb{R}^N. \]
Since $hx_\ast(t) = x_\ast(t+\theta_h)$, this implies that
\[ h f'(x_\ast(t)) = f'(hx_\ast(t)) h = f'(x_\ast(t+\theta_h)) h  \]
for all $t \in \mathbb{R}$. 
Now fix $s \in \mathbb{R}$ and $\varphi \in C \left([-\theta_h, 0], \mathbb{R}^N\right)$, and let $y(t)$ be the unique solution of the initial value problem
\begin{align*}
\begin{cases}
\dot{y}(t) &= f'(x_\ast(t)) y(t) + b \left[y(t) - y(t-\theta_h)\right], \qquad  t \geq s, \\
y(t) &= \varphi(t), \qquad   t \in [s-\theta_h, s]
\end{cases}
\end{align*}
so that $y_t = U(t, s) \varphi$. Then $h y(t)$ satisfies
\begin{align*}
\frac{d}{dt} (h y(t))& = h \dot{y}(t) \\ & = h f'(x_\ast(t)) y(t) + h b \left[y(t) - hy(t-\theta_h) \right] \\
 & = f'(x_\ast(t+\theta_h)) h y(t) + b \left[ hy(t) - h \left[h y(t-\theta_h) \right] \right]. 
\end{align*}
So $h y(t)$ is a solution of the initial value problem 
\begin{align*}
\begin{cases}
\dot{z}(t) &= f'(x_\ast(t+\theta_h)) z(t) + b \left[z(t) - hz(t-\theta_h)\right], \qquad  t \geq s \\
z(t) &= h \varphi(t), \qquad   t \in [s-\theta_h, s]. 
\end{cases}
\end{align*}
But then uniqueness of solutions implies that $hy_t = U(t+\theta_h, s+\theta_h) h \varphi$. So $h U(t, s) \varphi = U(t+\theta_h, s+\theta_h) h \varphi$ for all $\varphi \in C \left([-\theta_h, 0], \mathbb{R}^N\right)$, which proves \eqref{eq: U h}. 

\medskip 
Iteratively applying \eqref{eq: U h} gives
\begin{equation} \label{eq:iterates l}
h^n U(t, s) = U(t+n \theta_h, s+n \theta_h) h^n. 
\end{equation}
Since we have assumed that $x_\ast$ is a discrete wave with $H/K \simeq \mathbb{Z}_n$, there exists a $m \in \{1, \ldots, n \}$ such that $n \theta_h = mp$, cf. \eqref{eq:rational relation}. Substituting this into \eqref{eq:iterates l} gives that 
\[ h^n U(t, s) = U(t+mp, s+mp) h^n. \]
But iteratively applying \eqref{eq:floquet} implies that 
\[ U(t+mp, s+mp) = U(t, s) \]
and hence 
\[ h^n U(t, s) = U(t, s) h^n, \]
as claimed. 
\end{proof}

For $U(t, s), t \geq s$, the family of solution operators of \eqref{eq:linearized control}, and $h \in H$ the spatio-temporal symmetry used for control in \eqref{eq:control thm}, we define the \textbf{twisted monodromy operator (associated to $h$)} as 
\begin{equation} \label{eq:twisted mon op}
U_h := h^{-1} U(\theta_h, 0).
\end{equation}
Here we slightly abuse notation and view the matrix $h^{-1} \in \mathbb{R}^{N \times N}$ as an operator on the Banach space $C\left([-\theta_h, 0], \mathbb{R}^N\right)$ acting as $\left(h^{-1} \phi\right)(\vartheta) = h^{-1} \phi(\vartheta), \ \vartheta \in [-\theta_h, 0]$. The notion of the twisted monodromy operator is inspired by an equivariant version of the Poincar\'e map introduced for ODE in \cite[p. 55]{Fiedler88}. The difference with \cite{Fiedler88} is that we work with the flow of the linearized system, rather than with the Poincar\'e map, and work in the context of DDE, rather than ODE. 

The next lemma provides a relation between the \textbf{monodromy operator} $U(p, 0)$, which plays an important role in non-equivariant Floquet theory, and the twisted monodromy operator $U_h$.  

\begin{lemma} \label{lem:decomposition operators}
Consider the ODE \eqref{eq:ode} satisfying Hypothesis \ref{hyp:theorem}. For a fixed spatio-temporal symmetry $h \in H$ and scalar control gain $b \in \mathbb{R}$, let $U(t, s), \ t \geq s$, be the family of solution operators associated to the linearized system \eqref{eq:linearized control}. Let $n \in \mathbb{N}$ be such that $H/K \simeq \mathbb{Z}_n$ and 
let $m \in \{1, \ldots, n \}$ be such that $\theta_h = \frac{m}{n}p$. Moreover, let 
\[ U_h = h^{-1} U\left(\frac{m}{n}p, 0\right) \]
be the twisted monodromy operator. Then 
\[ U(p, 0)^m = h^n U_h^n. \]
\end{lemma}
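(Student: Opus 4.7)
The plan is to commute all the $h^{-1}$ factors to the left in the product defining $U_h^n$ using the intertwining relation from Lemma \ref{lem: fundamental solution dde}, and then use Floquet periodicity to collapse the resulting long-time evolution into a power of $U(p,0)$.

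First I would establish by induction on $k \in \mathbb{N}$ the identity
\begin{equation*}
U_h^k = h^{-k} U(k\theta_h, 0).
\end{equation*}
The base case $k=1$ is just the definition of $U_h$. For the inductive step, I would write $U_h^{k+1} = h^{-k} U(k\theta_h, 0) \, h^{-1} U(\theta_h, 0)$ and push the middle $h^{-1}$ to the left using relation \eqref{eq: U h} from Lemma \ref{lem: fundamental solution dde} in the equivalent form $U(t,s) h^{-1} = h^{-1} U(t+\theta_h, s+\theta_h)$. This turns the expression into $h^{-(k+1)} U((k+1)\theta_h, \theta_h) U(\theta_h, 0)$, and the evolution property of the two-parameter family $U(t,s)$ combines the last two factors into $U((k+1)\theta_h, 0)$, closing the induction.

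Setting $k=n$ and using $n\theta_h = mp$ from the relation \eqref{eq:rational relation} then gives $U_h^n = h^{-n} U(mp, 0)$. The final ingredient is the Floquet periodicity \eqref{eq:floquet}, which allows me to telescope
\begin{equation*}
U(mp, 0) = U(mp, (m-1)p) \, U((m-1)p, (m-2)p) \cdots U(p, 0) = U(p,0)^m,
\end{equation*}
since each factor equals $U(p,0)$ by $p$-periodicity. Multiplying both sides by $h^n$ yields $h^n U_h^n = U(p,0)^m$, as claimed.

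I do not anticipate a serious obstacle: the proof is essentially a bookkeeping exercise that exploits the two relations produced by Lemma \ref{lem: fundamental solution dde} together with the evolution and periodicity properties of $U(t,s)$. The only point to be careful about is that relation \eqref{eq: U h} is stated for $t \geq s$, so I should verify that the intermediate times arising in the inductive step (namely the shifts by multiples of $\theta_h$) stay within the regime where $U(t,s)$ is defined, which is immediate since all shifts are non-negative.
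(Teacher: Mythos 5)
Your proof is correct and follows essentially the same route as the paper: iterate the intertwining relation \eqref{eq: U h} to convert $U_h^n$ into $h^{-n}U(mp,0)$, then invoke Floquet periodicity to collapse $U(mp,0)$ into $U(p,0)^m$. The paper organizes the iteration as a telescoping product while you phrase it as an induction on $k$, but the algebra is identical.
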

\begin{proof}
Iteratively applying \eqref{eq:floquet} gives that 
\[U(jp, (j-1)p) = U(p, 0) \]
for $j \geq 1$. This implies that
\begin{align*}
U(mp, 0) &= U(mp, (m-1)p) U((m-1)p, (m-2)p) \ldots U(2p, p) U(p, 0) \\
& = \underbrace{U(p, 0) \ldots U(p, 0)}_{m \text{ times}}. 
\end{align*}
So
\begin{equation} \label{eq:Floquet power}
 U(mp, 0) = U(p, 0)^m.
\end{equation}
Next, iteratively applying \eqref{eq: U h} with $\theta_h = \frac{m}{n}$ gives that
\begin{align*}
U\left(\frac{jm}{n}p, \frac{(j-1)m}{n}p \right)  = h^{j-1} U\left(\frac{m}{n}p, 0 \right) h^{1-j}
\end{align*}
for $j \geq 1$. This implies that
\begin{align*}
U(mp, 0) &= U \left(mp, mp - \frac{m}{n}p \right) \ldots U \left(\frac{2m}{n}p, \frac{m}{n}p \right) U \left(\frac{m}{n}p, 0 \right) \\
&= \left[h^{n-1} U \left(\frac{m}{n}p, 0 \right) h^{1-n}\right] \left[ h^{n-2} U \left(\frac{m}{n}p, 0 \right) h^{2-n} \right] \ldots \left[ h U \left(\frac{m}{n}p, 0 \right) h^{-1} \right] U \left(\frac{m}{n}p, 0 \right)\\
& = h^{n-1} U \left(\frac{m}{n}p, 0 \right) h^{-1} U \left(\frac{m}{n}p, 0 \right) \ldots h^{-1} U \left(\frac{m}{n}p, 0 \right) \\
& = h^{n} h^{-1} U \left(\frac{m}{n}p, 0 \right) h^{-1} U \left(\frac{m}{n}p, 0 \right) \ldots h^{-1} U \left(\frac{m}{n}p, 0 \right) \\
& = h^n \left(h^{-1} U \left(\frac{m}{n}p, 0 \right)\right)^n
\end{align*}
so
\begin{equation} \label{eq:U power h}
 U(mp, 0) = h^n U_h^n.
\end{equation}
Combining this with \eqref{eq:Floquet power} yields that 
\[ U(p, 0)^m = h^n U_h^n, \]
as claimed. 
\end{proof}

The next proposition shows that the stability of discrete wave solutions is determined by the eigenvalues of the twisted monodromy operator. For the second statement of the proposition, the proof strategy is to first relate the spectrum of the twisted monodromy operator $U_h$ to the spectrum of the monodromy operator $U(p, 0)$. We then in turn relate the spectrum of the monodromy operator $U(p, 0)$ to the stability of the periodic orbit $x_\ast$ in \eqref{eq:control thm} by invoking stability theory for DDE (see \cite[Chapter 10]{HaleVL93} and \cite[Chapter XIV]{Diekmann95}). 

In order to discuss the spectrum of the twisted monodromy operator $U_h$, we first have to complexify the operator $U_h$ via a canonical procedure as detailed in, for example, \cite[Chapter 3]{Diekmann95}. However, we do not make the complexification explicit in notation, i.e. we write $U_h$ for both the real operator on the real Banach space $C\left([-\theta_h, 0], \mathbb{R}^N\right)$ and the complexified operator on the complex Banach space $C\left([-\theta_h, 0], \mathbb{C}^N\right)$. 

\begin{prop} \label{prop:spectral stability}
Consider the ODE \eqref{eq:ode} satisfying Hypothesis \ref{hyp:theorem}. For a fixed spatio-temporal symmetry $h \in H$ and scalar control gain $b \in \mathbb{R}$, consider the controlled system \eqref{eq:control thm}. Let $U(t, s), \ t \geq s$, be the family of solution operators associated to the linearized system \eqref{eq:linearized control} and 
let
\[ U_h = h^{-1} U(\theta_h, 0) \]
be the twisted monodromy operator. Then the following statements hold: 
\begin{enumerate}
\item The non-zero spectrum of $U_h$ consists of isolated eigenvalues of finite algebraic multiplicity. 
\item If $U_h$ has an eigenvalue strictly outside the unit circle, then $x_\ast$ is an unstable solution of \eqref{eq:control thm}. If the trivial eigenvalue $1 \in \sigma_{pt}(U_h)$ is algebraically simple and all other eigenvalues of $U_h$ lie strictly inside the unit circle, then $x_\ast$ is a stable solution of \eqref{eq:control thm}. 
\end{enumerate}
\end{prop}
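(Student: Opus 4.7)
The plan follows the two statements of the proposition in turn. For statement (1), the strategy is to show that $U_h$ is compact and then invoke Riesz--Schauder. By a standard Arzel\`a--Ascoli argument (see, e.g., \cite[Chapter III]{Diekmann95}), the solution operator $U(t,s)$ of a linear periodic DDE with delay $\theta_h$ is compact as soon as $t - s \geq \theta_h$, since its range then consists of uniformly Lipschitz functions on $[-\theta_h, 0]$. Hence $U(\theta_h, 0)$ is compact; composing with the bounded matrix multiplication $h^{-1}$ preserves compactness, so $U_h$ is compact, and Riesz--Schauder yields that its non-zero spectrum consists of isolated eigenvalues of finite algebraic multiplicity.

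For statement (2), the plan is to transport spectral information from $U_h$ to the ordinary monodromy operator $U(p, 0)$ via the factorization $U(p, 0)^m = h^n U_h^n$ from Lemma~\ref{lem:decomposition operators}, and then invoke the standard stability theory for periodic DDE (\cite[Chapter 10]{HaleVL93}, \cite[Chapter XIV]{Diekmann95}). The key structural ingredient is that $h^n$ commutes with $U_h$: by \eqref{eq: U k} it commutes with $U(\theta_h, 0)$, and trivially with $h^{-1}$. Moreover, $h^n$ acts as pointwise multiplication by an orthogonal matrix, so its spectrum lies on the unit circle. For the instability direction, given a non-zero $\mu \in \sigma(U_h)$, I restrict both operators to the finite-dimensional generalized eigenspace of $U_h$ at $\mu$, which is $h^n$-invariant by commutativity. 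On this space one chooses a simultaneous eigenvector $\phi$ with $U_h \phi = \mu \phi$ and $h^n \phi = \lambda \phi$, $|\lambda| = 1$. Then $U(p, 0)^m \phi = \lambda \mu^n \phi$, so by spectral mapping $U(p, 0)$ has an eigenvalue of modulus $|\mu|^{n/m}$; in particular $|\mu| > 1$ forces an eigenvalue of $U(p, 0)$ strictly outside the unit circle, which yields instability of $x_\ast$.

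For the stability half, the plan is to pin down the unit-circle spectrum of $U(p, 0)$. Under the hypothesis, $1$ is the unique unit-circle eigenvalue of $U_h$, algebraically simple, with generalized eigenspace spanned by $\phi := \dot{x}_\ast|_{[-\theta_h, 0]}$. Since $n\theta_h = mp$ and $\dot{x}_\ast$ is $p$-periodic, $h^n \phi = \phi$, so $1$ is the only unit-circle eigenvalue of $h^n U_h^n = U(p, 0)^m$ and it remains algebraically simple. Spectral mapping restricts unit-circle eigenvalues of $U(p, 0)$ to $m$-th roots of unity; combining the one-dimensionality of the generalized eigenspace of $U(p, 0)^m$ at $1$ with the direct check $U(p, 0) \phi = \phi$ (the control term vanishes on $\dot{x}_\ast$, leaving simply $p$-periodicity) isolates $\nu = 1$ as the unique such eigenvalue, still algebraically simple. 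All other non-zero eigenvalues of $U(p, 0)$ then lie strictly inside the unit disk, and standard periodic DDE stability theory concludes. The step I expect to be the main obstacle is precisely this last translation: transferring algebraic simplicity of $1 \in \sigma(U_h)$ to algebraic simplicity of $1 \in \sigma(U(p, 0))$---rather than merely simplicity of the orbit of $m$-th roots of unity collectively---requires the full package of the commutation $h^n U_h = U_h h^n$, spectral mapping, and the explicit identification of $\dot{x}_\ast$ via the spatio-temporal relation $h \dot{x}_\ast(t) = \dot{x}_\ast(t + \theta_h)$.
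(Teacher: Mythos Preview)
Your proof is correct and follows the same overall strategy as the paper: compactness of $U_h$ for statement (1), and for statement (2) the factorization $U(p,0)^m = h^n U_h^n$ from Lemma~\ref{lem:decomposition operators} together with the commutation $h^n U_h = U_h h^n$ from \eqref{eq: U k}, followed by an appeal to standard periodic DDE stability theory.

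The only organizational difference is in how the linear algebra is packaged. The paper globally decomposes $X = C([-\theta_h,0],\mathbb{C}^N)$ into the eigenspaces $X_j$ of the diagonalizable operator $h^n$, observes that both $U_h$ and $U(p,0)$ leave each $X_j$ invariant, and obtains the scalar relation $U(p,0)^m|_{X_j} = \lambda_j (U_h|_{X_j})^n$ on each piece. You instead restrict to the finite-dimensional generalized eigenspace of $U_h$ at a given $\mu$ and produce a simultaneous $(U_h,h^n)$-eigenvector there. Both exploit exactly the same commutation and unimodularity of $\sigma(h^n)$; the paper's global decomposition makes the multiplicity bookkeeping slightly more transparent, while your local argument is lighter. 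It is also worth noting that in the stability direction you are more explicit than the paper about the passage from $U(p,0)^m$ to $U(p,0)$: the paper simply asserts that simplicity of $1\in\sigma(U(p,0)^m)$ forces simplicity of $1\in\sigma(U(p,0))$, whereas you spell out that the generalized eigenspace of $U(p,0)^m$ at $1$ contains all generalized eigenspaces of $U(p,0)$ at $m$-th roots of unity, and then pin down $\nu=1$ via the explicit eigenvector $\dot{x}_\ast$.
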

\begin{proof}
We divide the proof into three steps: 

\medskip

\noindent \textbf{\textsc{Step 1:}} 
We start by proving the first statement of the proposition. 

For $s = 0$ and $t \in [0, \theta_h]$, the initial value problem \eqref{eq:linearized control}--\eqref{eq:initial condition} becomes
\begin{equation}\label{eq:ivp reduced}
\dot{y}(t) = f'(x_\ast(t)) y(t) + b \left[ y(t) - h \varphi(t-\theta_h)) \right], \qquad y(0) = \varphi(0). 
\end{equation}
Let $X(t)$ be the fundamental solution of the ODE $\dot{y}(t) = f'(x_\ast(t)) y(t) + b y(t)$, i.e. $X(t)$ is the matrix-valued solution to the initial value problem 
\begin{align*}
\frac{d}{dt} X(t) = f'(x_\ast(t)) X(t) + b X(t), \qquad X(0) = I. 
\end{align*}
Then we can solve \eqref{eq:ivp reduced} by variation of constants as 
\[ y(t) = X(t) \varphi(0) - \int_0^t X(t)  X(\zeta)^{-1} b h \varphi(\zeta-\theta_h) d\zeta. \]
Therefore $\left(U_h \varphi\right)(\vartheta) := h^{-1} y(\theta_h + \vartheta)$ with $\vartheta \in [-\theta_h, 0]$ is given by 
\begin{align*}
\left(U_h \varphi\right)(\vartheta) = X(\theta_h + \vartheta) \varphi(0) - \int_0^{t + \theta_h +\vartheta} X(t + \theta_h +\vartheta) X(\zeta)^{-1}b h \varphi(\zeta-\theta_h) d\zeta. 
\end{align*}
The operator 
\[R: C\left([-\theta_h, 0], \mathbb{R}^N\right) \to C\left([-\theta_h, 0], \mathbb{R}^N\right)\] 
defined by 
\[ (R \varphi)(\vartheta) = X(\theta_h + \vartheta) \varphi(0), \qquad \vartheta \in [-\theta_h, 0] \]
has finite dimensional range and is hence compact. Moreover, the operator 
\[V:  C\left([-\theta_h, 0], \mathbb{R}^N\right) \to  C\left([-\theta_h, 0], \mathbb{R}^N\right)\] 
defined by 
\[ (V \varphi)(\vartheta)  = - \int_0^{t + \theta_h +\vartheta} X(t + \theta_h +\vartheta) X(\zeta)^{-1}b h \varphi(\zeta-\theta_h) d\zeta, \qquad \vartheta \in [-\theta_h, 0] \]
is compact by the Arzel{\`a}-Ascoli theorem. Hence the operator $U_h = V + R$ is compact as well, and its non-zero spectrum consists of isolated eigenvalues of finite algebraic multiplicity, as claimed. 


\medskip
\noindent \textbf{\textsc{Step 2:}} We next decompose the Banach space $X = C\left([-\theta_h, 0], \mathbb{C}^N\right)$ into subspaces which are invariant under both the twisted monodromy operator $U_h$ and the monodromy operator $U(p, 0)$. We then use this decomposition to relate the spectrum of twisted monodromy opreator $U_h$ to the spectrum of the monodromy operator $U(p, 0)$. 

By assumption, $h^n \in \mathbb{R}^{N \times N}$ is an orthogonal matrix and hence it is diagonalizable. So if $\lambda_1, \ldots, \lambda_d$ are the distinct eigenvalues of $h^n$, then we can decompose $\mathbb{C}^N$ as
\[ \mathbb{C}^N = Y_1 \oplus \ldots \oplus Y_d \qquad \mbox{with} \qquad Y_i = \ker\left(\lambda_i I - h^n\right), \ 1 \leq i \leq d.\]
Next we decompose the Banach space $X = C\left([-\theta_h, 0], \mathbb{C}^N\right)$ as
\[ X = X_1 \oplus \ldots \oplus X_d \qquad \mbox{with} \qquad X_i = C \left([-\theta_h, 0], Y_i\right), \ 1 \leq i \leq d. \]
Now fix $1 \leq j \leq d$ and let $\varphi \in X_j$, i.e. $\lambda_j \varphi- h^n \varphi = 0$. Then equality \eqref{eq: U k} implies that
\begin{align*}
\lambda_j U_h \varphi - h^n U_h \varphi = U_h (\lambda_j \varphi- h^n \varphi ) = 0 . 
\end{align*}
So $U_h \varphi \in X_j$ and hence the space $X_j$ is invariant under the twisted monodromy operator $U_h$. 
Similarly, if $\varphi \in X_j$, then 
\begin{align*}
\lambda_j U(p, 0) \varphi - h^n U(p, 0) \varphi = U(p, 0) (\lambda_j \varphi - h^n \varphi ) = 0. 
\end{align*}
So $U(p, 0) \varphi \in X_j$ and hence the space $X_j$ is invariant under the monodromy operator $U(p, 0)$ as well. 


By Lemma \ref{lem:decomposition operators}, there exists a $n \in \mathbb{N}$ and a $m \in \{1, \ldots, n \}$ such that $U(p, 0)^m = h^n U_h^n$. But since both the operator $U(p, 0)$ and the operator $U_h$ leave the spaces $X_i, \ 1 \leq i \leq d$ invariant, it holds that 
\[  \left. U(p, 0)^m \right|_{X_j} = \left. \Big (h^n U_h^n \Big) \right|_{X_j}. \]
But  $h^n \varphi = \lambda_j \varphi$ for $\varphi \in X_j$, and hence 
\begin{equation} \label{eq:lambdaj}
\left. U(p, 0)^m \right|_{X_j} = \lambda_j \left( \left. U_h \right|_{X_j} \right)^n
\end{equation}
for all $1 \leq j \leq d$. By the first statement of the lemma, all non-zero spectrum of $ \left. U_h \right|_{X_j}$ consists of isolated eigenvalues of finite algebraic multiplicity. Therefore equality \eqref{eq:lambdaj} implies that also all the non-zero spectrum of $\left. U(p, 0)^m\right|_{X_j}$ consists of isolated eigenvalues of finite algebraic multiplicity, and moreover it holds that 
\begin{equation} \label{eq:ev decomposition dde}
\sigma_{pt} \left(\left. U(p, 0)^m \right|_{X_j}\right) = \lambda_j \sigma_{pt} \left( \left( \left. U_h \right|_{X_j} \right)^n \right) = \lambda_j \sigma_{pt} \left(  \left. U_h \right|_{X_j} \right)^n \qquad \mbox{for all } 1 \leq j \leq d,
\end{equation}
where all equalities count algebraic multiplicities.

\medskip
\noindent \textbf{\textsc{Step 3:}} We now prove the second statement of the proposition. First suppose that twisted monodromy operator $U_h$ has an eigenvalue $\mu$ with $\left| \mu \right| > 1$. Since 
\[ U_h = \left. U_h \right|_{X_1} + \ldots + \left. U_h \right|_{X_d}, \]
there exists a $1 \leq j \leq d$ such that $\left. U_h \right|_{X_j}$ has an eigenvalue $\mu$. Equality \eqref{eq:ev decomposition dde} then implies that $\left. U(p, 0)^m \right|_{X_j}$ has an eigenvalue $\lambda_j \mu^n$. Since the matrix $h^n$ is orthogonal, its eigenvalue $\lambda_j$ has norm one; hence $\left| \lambda_j \mu^n \right| = \left| \mu \right|^n > 1$ and $U(p,0)^m$ has an eigenvalue strictly outside the unit circle. This implies that 
$U(p, 0)$ has an eigenvalue strictly outside the unit circle as well. Stability theory for DDE (see \cite[Chapter 10]{HaleVL93} and \cite[Chapter XIV]{Diekmann95}) then implies that $x_\ast$ is an unstable solution of \eqref{eq:control thm}, as claimed. 

Vice versa, assume that the eigenvalue $1 \in \sigma_{pt}(U_h)$ is algebraically simple and all other eigenvalues of $U_h$ lie inside the unit circle. Since 
\[ U(p, 0)^m = \left. U(p, 0)^m \right|_{X_1} + \ldots + \left. U(p, 0)^m \right|_{X_d}, \]
equality \eqref{eq:ev decomposition dde} implies that $U(p, 0)^m$ has an algebraically simple eigenvalue; all other spectrum of $U(p, 0)^m$ lies strictly inside the unit circle. But then also $U(p, 0)$ has an an algebraically simple eigenvalue and all other spectrum of $U(p, 0)$ lies strictly inside the unit circle. Stability theory for DDE then implies that $x_\ast$ is a stable solution of \eqref{eq:control thm}. 
\end{proof}

\section{A characteristic matrix function for the twisted monodromy operator} \label{sec:cm}

We next show that we can compute the eigenvalues of the twisted monodromy operator $U_h$ by computing the zeroes of a scalar-valued function. This 
translates the infinite dimensional eigenvalue problem of $U_h$ to a one dimensional zero finding problem. 
The concept of a characteristic matrix function, as introduced in \cite{KaashoekVL22}, provides the theoretical framework for this dimension reduction. 
We start this section by giving an overview of the relevant definitions and results from \cite{KaashoekVL22}, 
and then discuss how the concept of a characteristic matrix function can be applied to the twisted monodromy operator \eqref{eq:twisted mon op}. 

\medskip

In the following, we denote by $\mathcal{L}(X, X)$ the space of bounded linear operators on a Banach space $X$. We denote by $I_{X}$ the identity operator on $X$, but supress the subscript whenever the underlying space is clear. 

\begin{defn}[{\cite[Definition 5.2.1]{KaashoekVL22}}]
Let $X$ be a complex Banach space, $T: X \to X$ a bounded linear operator and $\Delta: \mathbb{C} \to \mathbb{C}^{N \times N}$ an analytic matrix-valued function. We say that $\Delta$ is a \textbf{characteristic matrix function} for $T$ if there exist analytic functions
\[ E, F: \mathbb{C} \to \mathcal{L}\left(\mathbb{C}^N \oplus X, \mathbb{C}^N \oplus X \right) \]
such that $E(z), F(z)$ are invertible operators for all $z \in \mathbb{C}$ and such that 
\begin{equation} \label{eq:conjugatie}
\begin{pmatrix}
\Delta(z) & 0 \\
0 & I_X 
\end{pmatrix} 
= F(z) \begin{pmatrix}
I_{\mathbb{C}^N} & 0 \\
0 & I-zT
\end{pmatrix} E(z)
\end{equation}
holds for all $z \in \mathbb{C}$. 
\end{defn}

Given a non-zero complex number $\mu \in \mathbb{C}$, the equality \eqref{eq:conjugatie} implies that the operator $I-\mu T$ is not invertible if and only if $\Delta(\mu)$ is not invertible, i.e. if and only if $\det \Delta(\mu) = 0$. But the operator $I- \mu T$ is not invertible if and only if $\mu^{-1}$ is in the spectrum of $T$. So if $\Delta$ is a characteristic matrix function for $T$, then $\mu^{-1}$ is in the spectrum of $T$ if and only if $\mu$ is a zero of the equation
\begin{equation} \label{eq:ce T}
\det \Delta(z) = 0
\end{equation}
and computing the roots of the equation \eqref{eq:ce T} is equivalent to computing the non-zero spectrum of the operator $T$. 

The characteristic matrix function also captures other properties of the spectrum of $T$, such as geometric and algebraic multiplicity of the non-zero eigenvalues. We have summarized the relevant results on the connection between the spectrum of $T$ and the characteristic matrix function in the following lemma, which we cite without proof from \cite{KaashoekVL22}.  

\begin{lemma}[cf. {\cite[Theorem 5.2.2]{KaashoekVL22}}] \label{lem:cm properties} Let $X$ be a complex Banach space, $T: X \to X$ a bounded linear operator and $\Delta: \mathbb{C} \to \mathbb{C}^{N \times N}$ a characteristic matrix function for $T$. Then the following statements hold: 
\begin{enumerate}
\item The non-zero spectrum of $T$ consists of isolated eigenvalues of finite algebraic multiplicity, and $\mu^{-1} \in \mathbb{C} \backslash \{ 0 \}$ is an non-zero eigenvalue of $T$ if and only if 
\[ \det \Delta(\mu) = 0. \]
\item If $\mu^{-1} \in \mathbb{C} \backslash \{ 0 \}$ is a non-zero eigenvalue of $T$, then its geometric multiplicity equals the dimension of the space
\[ \ker \Delta(\mu). \]
\item If $\mu^{-1} \in \mathbb{C} \backslash \{ 0 \}$ is a non-zero eigenvalue of $T$, then its algebraic multiplicity equals the order of $\mu$ as a root of 
\[ \det \Delta(z) =0. \]
\end{enumerate}
\end{lemma}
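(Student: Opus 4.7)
The plan is to exploit the equivalence identity \eqref{eq:conjugatie} at three levels of increasing refinement: first to identify the spectra, then the geometric multiplicities, and finally the algebraic multiplicities, which will be the main obstacle. For statement (i), I would observe that since $E(z)$ and $F(z)$ are pointwise invertible for every $z \in \mathbb{C}$, the block operator $\operatorname{diag}(\Delta(z), I_X)$ appearing on the left of \eqref{eq:conjugatie} is invertible if and only if $I - zT$ is invertible. For $z \neq 0$, the latter holds iff $z^{-1} \notin \sigma(T)$, while the former holds iff the $N \times N$ matrix $\Delta(z)$ is invertible, iff $\det \Delta(z) \neq 0$. This gives the equivalence $\mu^{-1} \in \sigma(T) \setminus \{0\}$ iff $\det \Delta(\mu) = 0$. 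To conclude that these eigenvalues are isolated and of finite algebraic multiplicity, I would note that $\det \Delta$ is a scalar analytic function that does not vanish at $z = 0$ (because $I - 0 \cdot T = I$ is invertible), so its zero set is discrete and each zero has finite order.

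For statement (ii), the plan is a direct kernel comparison. Since $F(\mu)$ and $E(\mu)$ are linear isomorphisms, \eqref{eq:conjugatie} implies $\ker \operatorname{diag}(\Delta(\mu), I_X) \cong \ker \operatorname{diag}(I_{\mathbb{C}^N}, I - \mu T)$. These two kernels are $\ker \Delta(\mu) \oplus \{0\}$ and $\{0\} \oplus \ker(I - \mu T)$ respectively; comparing dimensions yields the geometric multiplicity equality.

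Statement (iii) is the subtle one and the main obstacle, since algebraic multiplicity is a Jordan-chain notion rather than a plain kernel count. My plan is to use a local Smith normal form of $\Delta$ at $\mu$: on a neighbourhood of $\mu$ there exist analytic invertible matrix-valued functions $U(z), V(z)$ and non-negative integers $k_1 \leq \cdots \leq k_N$ such that
$$\Delta(z) = U(z) \operatorname{diag}\bigl((z-\mu)^{k_1}, \ldots, (z-\mu)^{k_N}\bigr) V(z),$$
so that $\operatorname{ord}_{z=\mu} \det \Delta(z) = k_1 + \cdots + k_N$. To identify this sum with the algebraic multiplicity of $\mu^{-1}$ as an eigenvalue of $T$, I would construct Jordan chains for the operator pencil $I - zT$ at $z = \mu$ out of root functions of $\Delta$ (the columns of $V(z)^{-1}$ scaled by appropriate powers of $z - \mu$), transport them through the equivalence \eqref{eq:conjugatie}, and conversely build root functions of $\Delta$ from Jordan chains of $T$. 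This would show that the partial multiplicities of the two pencils coincide; summing them gives the algebraic multiplicity on each side. Making this transport rigorous in the infinite-dimensional setting is the technical core of Theorem 5.2.2 of \cite{KaashoekVL22} and rests on the Gohberg--Sigal logarithmic residue theorem for analytic operator-valued functions.
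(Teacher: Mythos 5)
The paper states this lemma without proof, explicitly citing it from \cite[Theorem 5.2.2]{KaashoekVL22}; there is therefore no in-paper proof to compare your attempt against. Your sketch follows what is in fact the standard route for this type of result (and the one taken in the cited monograph): read off the spectrum and kernel comparison directly from the pointwise invertibility of $E$ and $F$ in \eqref{eq:conjugatie}, then handle algebraic multiplicity via a local Smith form for $\Delta$ and the Gohberg--Sigal theory of finitely meromorphic operator functions, transporting Jordan chains through the equivalence. This is the correct shape of the argument.

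One point of logical ordering deserves attention. In part (i) you deduce that $\sigma(T)\setminus\{0\}$ is in bijection with the zero set of $\det\Delta$, observe that $\det\Delta(0)\neq 0$ (since $I-0\cdot T=I$ is invertible), and conclude that the nonzero spectrum is discrete. That much is fine. But the further assertion that these isolated spectral points are genuine \emph{eigenvalues} of \emph{finite algebraic multiplicity} does not follow from discreteness of $\sigma(T)\setminus\{0\}$ alone --- for a general bounded operator, an isolated point of the spectrum need not be an eigenvalue, and even if it is, its Riesz projection need not have finite rank. What closes this gap is exactly the finite-meromorphicity you invoke only in part (iii): from \eqref{eq:conjugatie}, the operator $(I-zT)^{-1}$ extends finitely meromorphically near each zero $\mu$ of $\det\Delta$ (because $\Delta(z)^{-1}$ is a finitely meromorphic matrix function and $E,F$ are analytic and invertible), so the Riesz projection at $\mu^{-1}$ has finite rank, whence $\mu^{-1}$ is an eigenvalue of finite algebraic multiplicity. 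As written, your proof of (i) silently relies on the machinery you only set up in (iii); that is a matter of presentation order rather than a genuine error, but the dependency should be made explicit. Parts (ii) and (iii) are otherwise sound as sketches.
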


Characteristic matrix functions have applications in the stability analysis of periodic solutions of DDE:  
under additional conditions on the relation between the period and the delay, characteristic matrix functions can be used to determine the stability of non-symmetric periodic orbits and of discrete waves.
Suppose that the equation
\begin{equation} \label{eq:period equals delay}
\dot{x}(t) = g(x(t), x(t-\tau))
\end{equation}
has no symmetries, but does have a periodic solution $x_\ast$ with period $\tau > 0$, i.e. the period of $x_\ast$ is equal to the time delay in \eqref{eq:period equals delay}. 
If we denote by $U(t, s), \ t \geq s$, the family of solution operators associated to the linearized equation
\[ \dot{y}(t) = \partial_1 g(x_\ast(t), x_\ast(t)) y(t) + \partial_2 g(x_\ast(t), x_\ast(t)) y(t-\tau), \]
then the monodromy operator $U(\tau, 0)$ has a characteristic matrix function, as was proven in \cite{VL92} and \cite[Section 11.4]{KaashoekVL22}. The spectrum of the monodromy operator determines whether $x_\ast$ is a stable solution of \eqref{eq:period equals delay}, and hence we can determine the stability of the periodic solution $x_\ast$ by computing roots of a scalar-valued function. 
This result is particularly useful in the context of non-equivariant Pyragas control, since in system \eqref{eq:pyragas} the period of the targeted periodic solution is indeed equal to the time delay; see \cite{Fiedler20, Sieber13} for applications of this fact.  

If the function $g$ in \eqref{eq:period equals delay}
is equivariant with respect to a group $\Gamma \subseteq O(N)$, i.e. if
\begin{equation} \label{eq:equivariance dde}
g(\gamma x, \gamma y) = \gamma g(x, y) \qquad \mbox{for all } x, y \in \mathbb{R}^N \mbox{ and } \gamma \in \Gamma, 
\end{equation} 
then $\Gamma$ is a symmetry group of the solutions of \eqref{eq:period equals delay} and 
we can describe the symmetries of its periodic solutions in the same way as we have done for the ODE setting in Section \ref{sec:setting}.
If a 
period solution $x_\ast$ has a spatio-temporal symmetry of the form 
\begin{equation} \label{eq:shift equals delay}
hx_\ast(t) = x_\ast(t+\tau), 
\end{equation}
i.e. the time shift in the spatio-temporal pattern \eqref{eq:shift equals delay} is equal to the time delay in the DDE \eqref{eq:period equals delay}, then there exists a characteristic matrix for the twisted monodromy operator $U_h = h^{-1} U(\tau, 0)$. Since the spectrum of the twisted monodromy operator determines whether $x_\ast$ is a stable solution of \eqref{eq:period equals delay}, we can also in this case determine the stability of $x_\ast$ by computing roots of a scalar-valued function. 
We formally state this in the following proposition, where we also give an explicit expression for a characteristic matrix function for $U_h = h^{-1} U(\tau, 0)$. 
We cite the proposition without proof from \cite{deWolff22}; the proof builds upon the recent result \cite[Theorem 6.1.1]{KaashoekVL22}, which provides a characteristic matrix function for any operator that is the sum of a Volterra operator and a finite rank operator. 

\begin{theorem}[cf. {\cite[Theorem 2.3]{deWolff22}}]
\label{thm:cm twisted}
Consider the DDE 
\begin{equation} \label{eq:dde thm 2}
\dot{x}(t) = g(x(t), x(t-\tau))
\end{equation}
with $f: \mathbb{R}^N \times \mathbb{R}^N$ a $C^2$-function and with time delay $\tau > 0$. Assume that
\begin{enumerate}
\item system \eqref{eq:dde thm 2} is equivariant with respect to a compact symmetry group $\Gamma \subseteq O(N)$, i.e. \eqref{eq:equivariance dde} holds; 
\item system \eqref{eq:dde thm 2} has a periodic solution $x_\ast$; 
\item the periodic solution 
has a spatio-temporal symmetry $h \in H$ with
\[ hx_\ast(t) = x_\ast(t+\tau). \]
\end{enumerate}
Let $U(t, s), \ t \geq s$ be the family of solution operators of the linearized DDE 
\begin{equation} \label{eq:linearized dde thm}
\dot{y}(t) = \partial_1 g(x_\ast(t), x_\ast(t-\tau))y(t) +  \partial_2 g(x_\ast(t), x_\ast(t-\tau)) y(t-\tau). 
\end{equation}
For $z \in \mathbb{C}$, let $F(t, z)$ be the fundamental solution of the ODE
\[ \dot{y}(t) = \partial_1 g(x_\ast(t), x_\ast(t-\tau))y(t) +  z \cdot \partial_2 g(x_\ast(t), x_\ast(t-\tau)) y(t) \]
with $F(0, z) = I_{\mathbb{C}^N}$. 
Then the analytic function
\[ \Delta(z) = I_{\mathbb{C}^N} - zh^{-1} F(\tau, z) \]
is a characteristic matrix function for the operator
\[ U_h = h^{-1} U(\tau, 0). \]
\end{theorem}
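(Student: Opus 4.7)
The plan is to apply \cite[Theorem 6.1.1]{KaashoekVL22}, which constructs an explicit characteristic matrix function for any bounded linear operator on a Banach space that decomposes as a Volterra operator plus a finite-rank operator; the task then reduces to producing such a decomposition of $U_h$ and identifying the resulting scalar-valued function with $I - z h^{-1} F(\tau, z)$.

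First I would produce the decomposition via variation of constants. Writing $A(t) := \partial_1 g(x_\ast(t), x_\ast(t-\tau))$ and $B(t) := \partial_2 g(x_\ast(t), x_\ast(t-\tau))$, and letting $X(t)$ be the fundamental solution of $\dot{y} = A(t) y$ with $X(0) = I$, the solution of the linearized DDE with initial data $\varphi \in C([-\tau, 0], \mathbb{C}^N)$ satisfies
\[ y(t) = X(t) \varphi(0) + \int_0^t X(t) X(\zeta)^{-1} B(\zeta) \varphi(\zeta - \tau) \, d\zeta, \qquad t \in [0, \tau]. \]
Composing with $h^{-1}$ and reindexing by $\vartheta = t - \tau$ writes $U_h = B_0 C + V$, where $C \varphi := \varphi(0)$, the finite-rank operator $B_0 : \mathbb{C}^N \to C([-\tau, 0], \mathbb{C}^N)$ sends $v$ to the function $\vartheta \mapsto h^{-1} X(\tau + \vartheta) v$, and $V$ is the Volterra integral operator obtained from the integral term. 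Quasi-nilpotence of $V$ (needed to ensure $(I - zV)^{-1}$ is entire in $z$) can be verified as in Step 1 of Proposition \ref{prop:spectral stability}.

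Then \cite[Theorem 6.1.1]{KaashoekVL22} yields that $\Delta(z) = I_{\mathbb{C}^N} - z\, C (I - zV)^{-1} B_0$ is a characteristic matrix function for $U_h$. To bring this into the form stated in the theorem, I would fix $v \in \mathbb{C}^N$, set $\psi := (I - zV)^{-1} B_0 v$, and apply the change of variable $w(t) := h\,\psi(t - \tau)$ on $[0, \tau]$; differentiating the resulting Volterra equation yields an initial value problem $w(0) = v$ for a linear ODE on $[0, \tau]$. Using the equivariance identities $h A(t) = A(t+\tau) h$ and $h B(t) = B(t+\tau) h$, derived by differentiating $g(\gamma x, \gamma y) = \gamma g(x, y)$ at $\gamma = h$ and substituting $h x_\ast(t) = x_\ast(t+\tau)$, I would identify the fundamental solution of this ODE with $F(t, z)$ and then read off $\psi(0) = h^{-1} F(\tau, z) v$, giving $\Delta(z) = I - z h^{-1} F(\tau, z)$.

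The main technical obstacle lies in this last identification. The ODE arising directly from the change of variable naturally carries a ``twisted'' right-hand side of the form $[A(t) + z B(t) h^{-1}] w$, which does not at first glance match the ODE $\dot y = [A(t) + z B(t)] y$ defining $F(t, z)$; reconciling the two fundamental solutions (equivalently, the two candidate expressions for $\Delta(z)$) requires a careful use of the equivariance relations together with the determinant identity $\det(I - XY) = \det(I - YX)$, and it is this symmetry bookkeeping that constitutes the core computational content of the proof.
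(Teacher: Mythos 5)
The paper does not prove this theorem: it is imported verbatim from \cite{deWolff22}, with only a remark that the proof rests on \cite[Theorem 6.1.1]{KaashoekVL22}. Your strategy --- write $U_h = V + B_0 C$ via variation of constants, with $V$ a Volterra operator (lower-triangular kernel) and $B_0 C$ of rank $N$, invoke the characteristic matrix function $\Delta(z) = I - z\,C(I-zV)^{-1}B_0$ from \cite[Theorem 6.1.1]{KaashoekVL22}, and then identify $C(I-zV)^{-1}B_0$ with a fundamental matrix via the substitution $w(t) := h\,\psi(t-\tau)$ --- is exactly the route the paper indicates, and your computation through that substitution is correct. (One small remark: quasi-nilpotence of $V$ is not actually verified in Step 1 of Proposition \ref{prop:spectral stability}, which only shows compactness via Arzel\`a--Ascoli; the relevant point is the lower-triangular kernel, which gives $\|V^k\| = O(c^k/k!)$ by the standard Volterra estimate.)

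The place where you hesitate, however, is not a computational obstacle to be overcome but a genuine discrepancy, and you should trust your derivation rather than try to massage it. Your substitution correctly yields the ODE $\dot w = [A(t) + z\,B(t)h^{-1}]\,w$, $w(0) = v$, with $A = \partial_1 g$ and $B = \partial_2 g$; hence $C(I-zV)^{-1}B_0 = h^{-1}\widetilde{F}(\tau,z)$ where $\widetilde{F}$ is the fundamental solution of that \emph{twisted} ODE, and $\Delta(z) = I - z\,h^{-1}\widetilde{F}(\tau,z)$. This is \emph{not} equal, nor equivalent, to $I - z\,h^{-1}F(\tau,z)$ with $F$ solving $\dot y = [A(t)+zB(t)]y$: in the test case $N=1$, $h=-1$, $A\equiv 0$, $B\equiv 1$, $\tau=1$, one obtains $1 + ze^{z}$ versus $1 + ze^{-z}$, which have different zero sets, so no combination of equivariance identities and $\det(I-XY)=\det(I-YX)$ can identify them. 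The resolution is that the theorem, as transcribed in this paper, appears to be missing the factor $h^{-1}$: the ODE defining $F(t,z)$ should read $\dot y = \partial_1 g\, y + z\,\partial_2 g\, h^{-1}\, y$. The paper's own use of the result confirms this: in Corollary \ref{cor:cm scalar control} one has $\partial_2 g = -bh$, yet the parametrized ODE \eqref{eq:cm stap1} carries the $z$-coefficient $-bz$ (which is $z\,\partial_2 g\,h^{-1}$), not $-bzh$ (which would be $z\,\partial_2 g$). So the ``symmetry bookkeeping'' step you anticipated as the core of the proof is neither needed nor possible; the twisted form $B(t)h^{-1}$ you derived is the correct one.
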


\noindent 
Returning to the topic of equivariant Pyragas control, 
we can write the controlled system \eqref{eq:control thm} as
\begin{equation} \label{eq:g dde}
\dot{x}(t) = g(x(t), x(t-\theta_h))
\end{equation}
with $g: \mathbb{R}^N \times \mathbb{R}^N \to \mathbb{R}^N$ given by
\begin{equation} \label{eq:g control}
g(x, y) = f(x) + k \left[x - h y \right], \qquad x, y \in \mathbb{R}^N. 
\end{equation}
For $h$ as in \eqref{eq:control thm} and fixed $j \in \mathbb{N} \cup \{0\}$, the function $g$ defined in \eqref{eq:g control} satisfies
\[ g(h^j x, h^j y) = h^j g(x, y) \qquad \mbox{for all } x, y \in \mathbb{R}^N \]
and hence the controlled system \eqref{eq:control thm} is equivariant with respect to the group generated by $h$ (cf. Section \ref{sec:equivariance}). By construction, the system \eqref{eq:control thm} has a discrete wave solution $x_\ast$ which satisfies
\begin{equation} \label{eq:shift}
hx_\ast(t) = x_\ast(t+\theta_h), 
\end{equation}
i.e. the time shift in the spatio-temporal relation \eqref{eq:shift} is equal to the time delay in \eqref{eq:g dde}. So the system \eqref{eq:control thm} satisfies the assumptions of Proposition \ref{thm:cm twisted}, 
and applying Proposition \ref{thm:cm twisted} gives a characteristic matrix function $\Delta$ for  the twisted monodromy operator. 

The expression for $\Delta$ involves the fundamental solution of the parametrized family of ODE 
\begin{equation} \label{eq:cm stap1}
\dot{y}(t) = f'(x_\ast(t)) y(t) + b \left[y(t) - z y(t) \right], \qquad z \in \mathbb{C}.
\end{equation}
Due to the fact that the control gain $b$ in \eqref{eq:cm stap1} is a \emph{scalar}, we can relatively explicitly compute the fundamental solution of \eqref{eq:cm stap1}, and hence also obtain a concrete expression of the characteristic matrix $\Delta$. We do this in the following corollary. 
 
\begin{cor} \label{cor:cm scalar control}
Consider the ODE \eqref{eq:ode} satisfying Hypothesis \ref{hyp:theorem}. For a fixed spatio-temporal symmetry $h \in H$ and scalar control gain $b \in \mathbb{R}$, 
let $U(t, s), \ t \geq s$, be the family of solution operators associated to the linear DDE \eqref{eq:linearized control}. 
Moreover, let $Y(t), \ t \in \mathbb{R}$, be the fundamental solution of the linear ODE 
\begin{equation} \label{eq:lin ode}
\dot{y}(t) = f'(x_\ast(t)) y(t) 
\end{equation}
with $Y(0) = I_{\mathbb{C}^N}$. 
Then the analytic function 
\[ \Delta(z) = I_{\mathbb{C}^N} - z\left[h^{-1} Y(\theta_h)\right] e^{b(1-z)\theta_h} \]
is a characteristic matrix function for the operator
\begin{equation} \label{eq:twisted cm}
U_h = h^{-1} U(\theta_h, 0).
\end{equation}

In particular, this means the following: 
Denote by $\mu_1, \ldots, \mu_N$ the (possibly non-distinct) eigenvalues of the matrix $h^{-1} Y(\theta_h) \in \mathbb{R}^{N \times N}$, i.e. 
\[ \{\mu_1, \ldots, \mu_N \} = \sigma(h^{-1} Y(\theta_h)) \]
and define the function
\begin{equation} \label{eq:ce control}
d(z) = \prod_{j = 1}^N \left(1-z \mu_j e^{b(1-z)\theta_h} \right). 
\end{equation}
Then $\mu^{-1} \in \mathbb{C} \backslash \{0 \}$ is an eigenvalue of the twisted monodromy operator \eqref{eq:twisted cm} if and only if $\mu$ is a root of $d(z) = 0$. 
\end{cor}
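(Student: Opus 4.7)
The plan is to invoke Theorem \ref{thm:cm twisted} and then exploit the scalar nature of the control gain $b$ to compute the resulting characteristic matrix function in closed form.

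First, I observe that the controlled equation \eqref{eq:control thm} can be written as $\dot{x}(t) = g(x(t), x(t-\theta_h))$ with $g(x,y) = f(x) + b[x - hy]$ as in \eqref{eq:g control}. The discussion at the start of Section \ref{sec:equivariance} shows that this $g$ is equivariant under the cyclic group generated by $h$, and by \eqref{eq:pattern} the discrete wave satisfies $hx_\ast(t) = x_\ast(t + \theta_h)$, so the time shift in the spatio-temporal symmetry equals the time delay in the DDE. All three hypotheses of Theorem \ref{thm:cm twisted} are therefore satisfied with $\tau = \theta_h$, and the theorem supplies a characteristic matrix function $\Delta(z) = I_{\mathbb{C}^N} - z h^{-1} F(\theta_h, z)$ for $U_h$, where $F(\cdot, z)$ denotes the fundamental solution of the parameterised ODE described in the statement of Theorem \ref{thm:cm twisted}.

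Second, I simplify the fundamental solution $F(\theta_h, z)$. For the particular $g$ considered here this parameterised ODE reduces to
\[ \dot{y}(t) = f'(x_\ast(t)) y(t) + b(1-z) y(t), \]
as noted in \eqref{eq:cm stap1}. The key point is that the coefficient $b(1-z) \in \mathbb{C}$ is a \emph{scalar} and therefore commutes with the matrix-valued $f'(x_\ast(t))$. This lets me factor out the scalar contribution by writing $y(t) = e^{b(1-z) t} u(t)$; a direct computation shows that $u$ satisfies $\dot{u}(t) = f'(x_\ast(t)) u(t)$, so $u(t) = Y(t) u(0)$ and hence $F(t, z) = e^{b(1-z) t} Y(t)$. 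Evaluating at $t = \theta_h$ and substituting back yields the claimed formula $\Delta(z) = I_{\mathbb{C}^N} - z [h^{-1} Y(\theta_h)] e^{b(1-z)\theta_h}$.

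Third, I derive the scalar root equation. Lemma \ref{lem:cm properties} tells me that $\mu^{-1} \in \mathbb{C} \setminus \{0\}$ is an eigenvalue of $U_h$ if and only if $\det \Delta(\mu) = 0$. Since $z\, e^{b(1-z)\theta_h}$ is a scalar factor, the eigenvalues of the matrix $z\, e^{b(1-z)\theta_h} h^{-1} Y(\theta_h)$ are precisely $z \mu_j e^{b(1-z)\theta_h}$, where $\mu_1, \ldots, \mu_N$ are the eigenvalues of $h^{-1} Y(\theta_h)$. Expanding the determinant as a product over eigenvalues then gives
\[ \det \Delta(z) = \prod_{j=1}^N \bigl( 1 - z \mu_j e^{b(1-z)\theta_h} \bigr) = d(z), \]
and the claim about roots of $d$ follows at once.

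The main obstacle in executing this plan is the reduction of the parameterised ODE to the scalar-coefficient form above; everything downstream of that simplification is a short routine computation. That reduction is precisely where the scalar choice of control gain makes the problem tractable, as $b(1-z)I$ commutes with every matrix and absorbs cleanly into a scalar exponential. Were $b$ replaced by a general matrix gain, the parameterised ODE would no longer have this commuting structure, $F(\theta_h, z)$ would not admit a closed-form expression involving only $Y(\theta_h)$, and the characteristic equation $\det \Delta(z) = 0$ could no longer be written as a product over the eigenvalues of a single fixed matrix.
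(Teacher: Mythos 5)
Your proposal is correct and follows essentially the same route as the paper: apply Theorem \ref{thm:cm twisted} with $\tau = \theta_h$, observe that the scalar factor $b(1-z)$ lets you write $F(t,z) = Y(t)e^{b(1-z)t}$, and reduce the determinant to a product over eigenvalues of $h^{-1}Y(\theta_h)$. Your exposition is slightly more detailed (you explicitly verify the hypotheses of Theorem \ref{thm:cm twisted} and derive the formula for $F(t,z)$ by substitution, whereas the paper states it directly), but the mathematical content is the same.
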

\begin{proof}
If $Y(t)$ is the fundamental solution of the ODE \eqref{eq:lin ode} with $Y(0) = I$, then
\[ F(t, z) = Y(t) e^{b(1-z)t}, \qquad z \in \mathbb{C} \]
is the fundamental solution of the ODE \eqref{eq:cm stap1} with $F(0, z) = I$. So by Proposition \ref{thm:cm twisted}, the analytic function 
\begin{align*}
\Delta(z) &= I_{\mathbb{C}^N} - h^{-1} F(\theta_h, z) \\
&= I_{\mathbb{C}_N} - z \left[h^{-1} Y(\theta_h) \right] e^{b(1-z)\theta_h}
\end{align*}
is a characteristic matrix function for the operator \eqref{eq:twisted cm}, as claimed.

\medskip

Since the multiplicative factor $e^{b(1-z)\theta_h}$ is a complex scalar, we have that 
\[ \det \Delta(z) = \prod_{j = 1}^N \left(1-z \mu_j e^{b(1-z)\theta_h} \right), \]
where $\mu_1, \ldots, \mu_N$ are the eigenvalues of $h^{-1} Y(\theta_h) \in \mathbb{R}^{N \times N}$. Therefore, Lemma \ref{lem:cm properties} implies that $\mu^{-1} \in \mathbb{C} \backslash \{0 \}$ is an eigenvalue of \eqref{eq:twisted cm} if and only $\mu$ is a root of $d(z) = 0$, with $d$ as in \eqref{eq:ce control}. 
\end{proof}

\section{Eigenvalues analysis and proof of the main theorem} \label{sec:eigenvalues}

The function $d$ defined in \eqref{eq:ce control} is the product of $N$ factors. So to find the zeroes of $d$, it suffices to find the zeroes of each of the individual factors, i.e. to find roots of equations of the form 
\begin{align} \label{eq:factor}
1 - z \mu_\ast e^{b(1-z)\theta_h} &= 0 \qquad \mbox{with } \mu_\ast \in \mathbb{C} \mbox{ and } b \in \mathbb{R} \nonumber \\
\intertext{or, by setting $b_\ast = b \theta_h$, equations of the form}
1 - z \mu_\ast e^{b_\ast(1-z)} &= 0 \qquad \mbox{with } \mu_\ast \in \mathbb{C} \mbox{ and } b_\ast \in \mathbb{R}.
\end{align}
The equation \eqref{eq:factor} was previously studied in \cite{Miyazaki11}. In fact, Lemma \ref{cor:trivial ce} and Lemma \ref{cor:unstable ce} in this section are similar to \cite[Lemma 7.4]{Miyazaki11}, but we provide new and simpler proofs. 
Our strategy here is to prove a connection between the roots of \eqref{eq:factor} and roots of the equation
\begin{equation} \label{eq:ce chap11}
0 = -z + \alpha + \beta e^{-z}
\end{equation}
with $\alpha, \beta \in \mathbb{R}$. 
Equation \eqref{eq:ce chap11} also appears in the stability analysis of equilibria of autonomous DDE; in that context, study of \eqref{eq:ce chap11} dates back at least at least to \cite{Hayes50, Bellman63}, see also \cite{HaleVL93, Diekmann95} for more recent accounts. 
So we can analyze the equation \eqref{eq:factor} by first exploiting the connection with \eqref{eq:ce chap11} and then applying the results from \cite{Hayes50, Bellman63, HaleVL93, Diekmann95}. This significantly simplifies the stability analysis compared to \cite{Miyazaki11} and makes the argument more systematic. In addition, the connection between \eqref{eq:factor} and \eqref{eq:ce chap11} provides a new link between Floquet multipliers of a non-autonomous equation and eigenvalues of an autonomous equation. This connection unexpected in the context of DDE and we therefore believe it to be interesting in its own right; see also the discussion in Section \ref{sec:outlook}. 

\medskip

We first relate solutions of the equation
\begin{equation}\label{eq:factor 2}
0 = 1-ze^{\alpha} e^{\beta z}
\end{equation}
with $\alpha, \beta \in \mathbb{R}$ 
to solutions of the equation
\[ 0 = -z + \alpha + \beta e^{-z}. \]

\begin{lemma}\label{lem:exponential}
Let $\alpha, \beta \in \mathbb{R}$ and consider the analytic functions
\begin{align*}
F: \mathbb{C} \to \mathbb{C}, \qquad F(z) &= 1-ze^{\alpha} e^{\beta z} \\
G: \mathbb{C} \to \mathbb{C}, \qquad G(z) &= -z + \alpha + \beta e^{-z}. 
\end{align*}
Then $\mu \neq 0$ is a solution of $F(z) = 0$ if and only if $\mu = e^{-\lambda}$, where $\lambda$ is a solution of $G(z) = 0$. So
\begin{equation}\label{eq:set equality}
 \big \{ \mu \in \mathbb{C} \backslash \{0 \} \mid F(\mu) = 0 \big \} = \big \{ e^{-\lambda} \mid G(\lambda) = 0 \big \}.
\end{equation}
\end{lemma}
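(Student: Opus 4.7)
The plan is to prove the set equality \eqref{eq:set equality} by direct substitution in both directions, relying on two elementary facts about the complex exponential: the additive law $e^{A+B} = e^A e^B$, and the fact that $e^w = 1$ holds precisely when $w \in 2\pi i\,\mathbb{Z}$.

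First I would handle the easy inclusion $\{e^{-\lambda} : G(\lambda) = 0\} \subseteq \{\mu \in \mathbb{C} \setminus \{0\} : F(\mu) = 0\}$. If $G(\lambda) = 0$, then $-\lambda + \alpha + \beta e^{-\lambda} = 0$, and exponentiating both sides gives $e^{-\lambda}\cdot e^{\alpha} \cdot e^{\beta e^{-\lambda}} = 1$, which is exactly $F(e^{-\lambda}) = 0$. Since $e^{-\lambda}$ is never zero, this produces a nonzero root of $F$.

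For the converse inclusion, I would start from a given $\mu \neq 0$ with $F(\mu) = 0$ and pick any branch of the complex logarithm: choose $\lambda_0 \in \mathbb{C}$ with $e^{-\lambda_0} = \mu$. Substituting into $F(\mu) = 0$ yields $e^{-\lambda_0 + \alpha + \beta e^{-\lambda_0}} = 1$, so
\[
-\lambda_0 + \alpha + \beta e^{-\lambda_0} = 2\pi i k
\]
for some $k \in \mathbb{Z}$, i.e.\ $G(\lambda_0) = 2\pi i k$, which a priori need not vanish. I would then shift branches by setting $\lambda := \lambda_0 + 2\pi i k$; this preserves $e^{-\lambda} = \mu$ (by $2\pi i$-periodicity of the exponential) and exactly cancels the offending term, giving $G(\lambda) = G(\lambda_0) - 2\pi i k = 0$. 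Thus $\mu = e^{-\lambda}$ with $\lambda \in G^{-1}(0)$, as required.

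The only point that requires care — and hence the main (minor) obstacle — is the branch-choice step in the reverse direction. Naively substituting $\mu = e^{-\lambda}$ into $F(\mu) = 0$ only produces $G(\lambda) \in 2\pi i\,\mathbb{Z}$ rather than $G(\lambda) = 0$, and one must exploit the $2\pi i$-periodicity of $e^{-\,\cdot}$ to select the correct logarithm of $\mu$. Once this is noted, the rest of the argument is a one-line manipulation of the exponential.
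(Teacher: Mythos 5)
Your proof is correct and follows essentially the same route as the paper: exponentiate $G(\lambda)=0$ for one inclusion, and for the converse pick a logarithm $\lambda_0$ of $\mu^{-1}$, observe $G(\lambda_0) \in 2\pi i\,\mathbb{Z}$, and shift the branch by $2\pi ik$ to land on an actual zero of $G$. The only (immaterial) difference is that the paper additionally verifies injectivity of $\lambda \mapsto e^{-\lambda}$ on $G^{-1}(0)$, which is not needed for the set equality as stated and which you rightly omit.
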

\begin{proof} For $z \in \mathbb{C}$, it holds that 
\begin{equation}\label{eq:relation}
F(e^{-z}) = 1-e^{G(z)}.
\end{equation}
So if $G(z) = 0$, then $F(e^{-z}) = 0$, and hence the map 
\begin{equation}\label{eq:map sets}
\begin{aligned}
\big \{ \lambda \in \mathbb{C} \mid G(\lambda) = 0 \big \} &\to \big \{ \mu  \in \mathbb{C} \backslash \{0 \} \mid F(\mu) = 0 \big \} \\
\lambda &\mapsto e^{-\lambda} 
\end{aligned}
\end{equation}
is well-defined. We show that the map \eqref{eq:map sets} is bijective.

\medskip
\textsc{Injective:} suppose that $\lambda, \nu$ satisfy $G(\lambda) = 0, G(\nu) = 0$ and $e^{-\lambda} = e^{-\nu}$. Then 
\begin{align*}
\lambda &=  \alpha + \beta e^{-\lambda} \\
\nu &=  \alpha + \beta e^{-\nu}
\end{align*} 
but since $e^{-\nu} = e^{-\mu}$, this implies that $\lambda = \nu$. Hence the map \eqref{eq:map sets} is injective. 

\medskip

\textsc{Surjective:} let $\mu \in \mathbb{C} \backslash \{0 \}$ be such that $F(\mu) = 0$ and let $\tilde{\lambda} \in \mathbb{C}$ be such that $e^{-\tilde{\lambda}} = \mu$. Then 
\[ 1 = e^{-\tilde{\lambda}} e^{\alpha} e^{\beta e^{-\tilde{\lambda}}} \]
and hence
\[ -\tilde{\lambda} +\alpha + \beta e^{-\tilde{\lambda}} = 2 \pi i k \]
for some $k \in \mathbb{Z}$. But then $\lambda := \tilde{\lambda} +2 \pi i k$ satisfies
\[ - \lambda + \alpha + \beta e^{-\lambda} = 0 \]
and $e^{-\lambda} = \mu$. So the map \eqref{eq:map sets} is surjective. We conclude that the map \eqref{eq:map sets} is bijective and the statement of the lemma follows. 
\end{proof}

Substituting $z = \pm i \omega, \ \omega \geq 0$ in \eqref{eq:factor} and solving for $\alpha, \beta$ gives that $\pm i \omega$ is a pair of roots of \eqref{eq:factor} if 
\begin{equation}\label{eq:imaginary ev}
\alpha = \frac{\omega \cos(\omega)}{\sin(\omega)}, \qquad \beta = -\frac{\omega}{\sin(\omega)}, \qquad \mbox{with } \omega \in (\pi k, \pi(k+1)) \mbox{ and } k \in \mathbb{N} \cup \{0 \}. 
\end{equation}
Moreover, $z = 0$ is a solution of \eqref{eq:factor} if 
\begin{equation}\label{eq:zero ev}
\alpha + \beta = 0. 
\end{equation}
For $\omega = 0$, the curve \eqref{eq:imaginary ev} intersects the line \eqref{eq:zero ev} at $(\alpha, \beta) = (1, -1)$, corresponding to $z = 0$ being a double root of \eqref{eq:factor}. Otherwise the curves defined by \eqref{eq:imaginary ev}--\eqref{eq:zero ev} do not intersect each other, and they divide the $(\alpha, \beta)$-parameter plane into regions, as is graphically depicted in Figure \ref{fig:curves}. 

A more detailed analysis, as was for example done in \cite[Chapter XI]{Diekmann95}, shows that the 
the curve
\begin{align*}
\alpha = \frac{\omega \cos(\omega)}{\sin(\omega)}, \qquad \beta = -\frac{\omega}{\sin(\omega)}, \qquad 0 \leq \omega < \pi
\end{align*}
and the half line
\[ \alpha + \beta = 0, \qquad \alpha < 1 \]
bound the \textbf{stability region} of \eqref{eq:ce chap11}, i.e the region of parameter values for which \eqref{eq:ce chap11} has no eigenvalue in the strict right half of the complex plane. 
We make this precise in the following proposition, which we take without proof from \cite[Chapter XI]{Diekmann95}.  

\begin{prop}[cf. {\cite[Chapter XI]{Diekmann95}}] \label{prop: chap 11}
Consider the equation \eqref{eq:ce chap11} with $\alpha, \beta \in \mathbb{R}$ and $z \in \mathbb{C}$. In the $(\alpha, \beta)$-plane, consider the half-line
\begin{equation} \label{eq:R}
R = \left \{ (\alpha, \beta) \mid \alpha + \beta = 0, \ \alpha < 1 \right \} 
\end{equation}
and the curve
\begin{equation} \label{eq:C}
 C = \left \{ (\alpha, \beta) \mid \alpha = \frac{\omega \cos (\omega)}{\sin(\omega)}, \  \beta = - \frac{\omega}{\sin(\omega)}, \ 0 \leq \omega < \pi \right \}.
\end{equation}
Denote by $S$ the region in the $(\alpha, \beta)$-plane bounded by $R$ and $C$ (see Figure \ref{fig:stability region}). It holds that
\begin{enumerate}
\item For $(\alpha, \beta)$ on the half-line $R$, $z = 0$ is a solution of \eqref{eq:ce chap11} and all other solutions $z$ satisfy $\re z < 0$. 
\item For $(\alpha, \beta)$ on the curve $C$, \eqref{eq:ce chap11} has two solutions on the imaginary axis and all other solutions $z$ satisfy $\re z < 0$. 
\item For $(\alpha, \beta)$ in the interior of $S$, all roots $z$ of \eqref{eq:ce chap11} satisfy $\re z < 0$. 
\item For $(\alpha, \beta) \in \mathbb{C} \backslash S$, \eqref{eq:ce chap11} has at least one root $z$ satisfying $\re z > 0$. 
\end{enumerate}
\end{prop}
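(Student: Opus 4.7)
The plan is to identify the boundary of the stability region by locating those parameters $(\alpha,\beta)$ at which \eqref{eq:ce chap11} has a root on the imaginary axis, and then to apply a continuity / D-subdivision argument in the parameter plane to count unstable roots in each connected component of the complement.

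First I would characterize the crossing set. Substituting $z=0$ into \eqref{eq:ce chap11} gives $\alpha+\beta=0$. Substituting $z=i\omega$ with $\omega>0$ and separating real and imaginary parts gives $\alpha+\beta\cos(\omega)=0$ and $\omega=-\beta\sin(\omega)$, which for $\sin(\omega)\ne 0$ yields exactly the parametrization in \eqref{eq:imaginary ev}. The line $\alpha+\beta=0$ and the lowest branch ($\omega\in[0,\pi)$) of that family, which is precisely $C$, meet at $(\alpha,\beta)=(1,-1)$ as $\omega\to 0^+$; the higher branches ($\omega>\pi$) lie elsewhere in the plane, as a monotonicity check on $\omega\mapsto(\omega\cot\omega,-\omega/\sin\omega)$ confirms, and therefore do not touch $S$.

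Second, I would install the two standard ingredients of a D-subdivision argument. The estimate $|z|\le |\alpha|+|\beta|\,|e^{-z}|\le |\alpha|+|\beta|$ for $\re z\ge 0$ gives a locally uniform a priori bound on right half-plane roots. Combined with Rouch\'e's theorem applied on a sufficiently large disk, this yields continuous dependence of roots on $(\alpha,\beta)$ and, consequently, that the number of roots of \eqref{eq:ce chap11} in the open right half-plane is locally constant on $\mathbb{R}^2\setminus(R\cup C\cup\{\text{higher branches}\})$.

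Third, I would count at representative points. Inside $S$ the test point $(\alpha,\beta)=(0,0)$ reduces \eqref{eq:ce chap11} to $-z=0$, whose only root is $z=0$; since $z=0$ is a root only when $\alpha+\beta=0$, a small perturbation into the interior of $S$ moves this root to the left half-plane, proving statement 3. For a point outside $S$ with $\alpha>1,\ \beta=0$ the equation has the real root $z=\alpha>0$, so by continuity at least one root lies in the open right half-plane throughout the unbounded component, proving statement 4. Statements 1 and 2 then follow by direct inspection on the boundary: on $R$ the root $z=0$ is present, while any other real root $z>0$ would give $z=\alpha+\beta e^{-z}<\alpha+\beta=0$ (contradiction) and any imaginary root would force $(\alpha,\beta)$ onto $C$, which meets $R$ only at $(1,-1)$, excluded by $\alpha<1$; on $C$ the pair $z=\pm i\omega$ exists by construction, and the locally constant count from the interior of $S$ together with the absence of higher crossing branches shows all other roots satisfy $\re z<0$.

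The main obstacle is the rigorous execution of the D-subdivision step: one must justify continuous dependence of the (possibly infinitely many) roots on parameters via a combination of the a priori bound and Rouch\'e, and then verify that $S$ is genuinely a single connected component of the complement of the full crossing set. The latter is the delicate monotonicity analysis of the curves \eqref{eq:imaginary ev} for $\omega\ge\pi$, showing that they stay away from $S$ and therefore do not introduce additional crossings that would invalidate the count on the representative test point inside $S$.
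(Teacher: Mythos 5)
The paper does not prove this proposition at all: it cites it without proof from Diekmann et al.\ (``we take without proof from \cite[Chapter XI]{Diekmann95}''). So there is no internal proof to compare against, but your D-subdivision argument is exactly the classical method that the cited reference uses, and it is the right strategy here. Two small issues worth flagging. First, your ``interior'' test point $(\alpha,\beta)=(0,0)$ actually lies on the half-line $R$ (since $\alpha+\beta=0$ and $\alpha=0<1$), so it is a boundary point of $S$, not an interior one; you patch this by perturbing, but you never verify the sign of the perturbed root. Expanding near $z=0$ gives $z\approx(\alpha+\beta)/(1+\beta)$, and since $1+\beta>0$ near the origin, crossing into $\alpha+\beta<0$ does push the root left, so the claim is true — it just needs to be said. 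A cleaner choice would have been a genuinely interior point such as $(\alpha,\beta)=(-1,0)$, where the equation degenerates to $-z-1=0$ with the single root $z=-1$. Second, when $\beta=0$ the transcendental equation has only finitely many roots, whereas for $\beta\neq0$ it has infinitely many; the quantity that varies continuously under Rouch\'e is therefore the number of roots in a bounded right-half-plane region (which the a priori bound reduces to all right-half-plane roots), not the full root set, and this should be made explicit when invoking local constancy of the unstable count. The ``delicate monotonicity analysis'' you defer for the higher branches $\omega>\pi$ is indeed where most of the remaining work lives in a fully rigorous version, and the reference handles precisely this; you are right to identify it as the main remaining obstacle.
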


\vspace{2 \baselineskip}

\begin{minipage}[t]{0.45\textwidth}
\fbox{
\includegraphics[width = 1.05\textwidth]{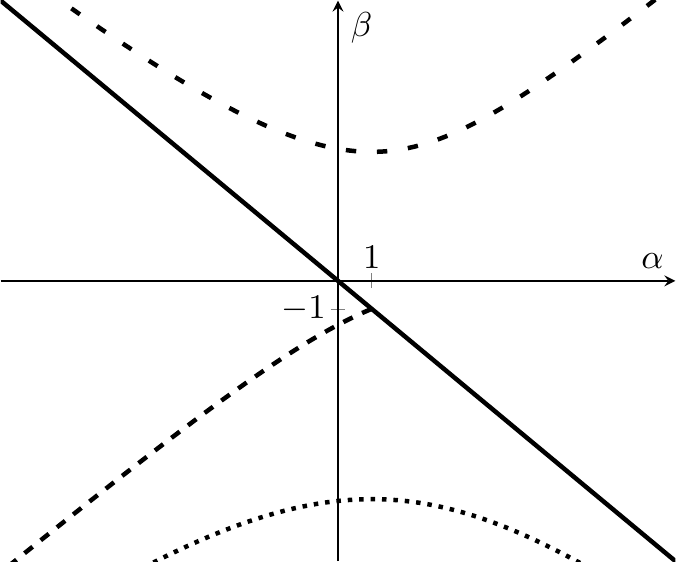}}
\captionof{figure}{Line \eqref{eq:zero ev} (solid), curve \eqref{eq:imaginary ev} with $0 \leq \omega < \pi$ (dashed), curve \eqref{eq:imaginary ev} with $\pi < \omega<2 \pi$ (loosely dashed), and curve \eqref{eq:imaginary ev} with $2 \pi < \omega < 3 \pi$ (dotted).}
\label{fig:curves}
\end{minipage} 
\hfill
\begin{minipage}[t]{0.45\textwidth}
\fbox{
\includegraphics[width = 1.05\textwidth]{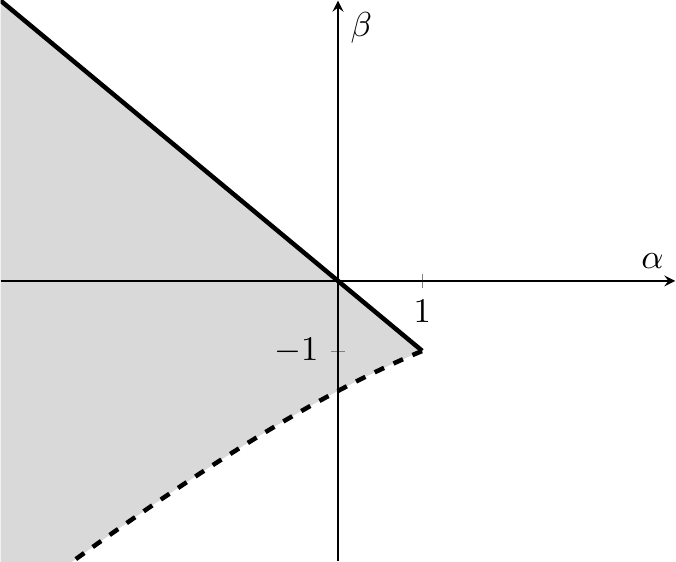}}
\captionof{figure}{The line $R$ defined in \eqref{eq:R} (solid), the curve $C$ defined in \eqref{eq:C} (dashed) and the stability region $S$ (shaded).}
\label{fig:stability region}
\end{minipage}

\vspace{2\baselineskip}

We are now ready to analyze the zeroes of the function \eqref{eq:ce control}, or, equivalently, to analyze the roots of the equation \eqref{eq:factor}. By Lemma \ref{lem:cm properties} and Corollary \ref{cor:cm scalar control}, a non-zero number $\mu^{-1} \neq 0$ is an eigenvalue of the twisted monodromy operator if and only if $\mu$ is a zero of the characteristic equation \eqref{eq:ce control}. 
Therefore, to prove that all non-trivial eigenvalues of the twisted monodromy operator are \emph{inside} the unit circle, we have to prove that all non-trivial zeroes of \eqref{eq:ce control} are \emph{outside} the unit circle. 


\begin{cor}[Case $\mu_\ast = 1$] \label{cor:trivial ce}
Let $b_\ast < 0$. Then the equation
\begin{equation}\label{eq:trivial ce}
1-z e^{b_\ast(1-z)} = 0
\end{equation}
has a simple root $z = 1$; all other roots of \eqref{eq:trivial ce} lie strictly outside the unit circle. 
\end{cor}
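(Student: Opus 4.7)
The plan is to reduce this to the transcendental equation studied in Proposition \ref{prop: chap 11} via Lemma \ref{lem:exponential}. First I would rewrite the left-hand side of \eqref{eq:trivial ce} as
\[
F(z) = 1 - z e^{b_\ast(1-z)} = 1 - z \, e^{b_\ast} \, e^{-b_\ast z},
\]
so that $F$ has the form $1 - z e^{\alpha} e^{\beta z}$ from Lemma \ref{lem:exponential} with the specific choice $\alpha = b_\ast$ and $\beta = -b_\ast$. By that lemma, the non-zero roots of $F(z)=0$ are exactly the values $\mu = e^{-\lambda}$ where $\lambda$ ranges over the roots of
\[
G(z) = -z + b_\ast - b_\ast e^{-z} = 0,
\]
and this correspondence is bijective.

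Next I would locate $(\alpha,\beta) = (b_\ast,-b_\ast)$ in the parameter plane of Proposition \ref{prop: chap 11}. Since $\alpha + \beta = 0$ and $\alpha = b_\ast < 0 < 1$, the pair $(\alpha,\beta)$ lies on the half-line $R$ defined in \eqref{eq:R}. Consequently, part (1) of Proposition \ref{prop: chap 11} applies: $\lambda = 0$ is a root of $G(\lambda)=0$ and every other root $\lambda$ satisfies $\re \lambda < 0$. Translating back through the bijection of Lemma \ref{lem:exponential}, the root $\lambda = 0$ produces the root $\mu = e^{0} = 1$ of $F$, while any other root $\mu$ of $F$ corresponds to some $\lambda \neq 0$ with $\re\lambda < 0$, and therefore
\[
|\mu| = \left|e^{-\lambda}\right| = e^{-\re\lambda} > 1,
\]
i.e.\ lies strictly outside the unit circle. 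Injectivity in the lemma guarantees that $\mu = 1$ is hit exactly once by the map $\lambda \mapsto e^{-\lambda}$, so there is no other root of $F$ equal to $1$.

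Finally, to confirm simplicity of $z = 1$ as a root of $F$, I would compute directly:
\[
F'(z) = -e^{b_\ast(1-z)}\bigl(1 - b_\ast z\bigr), \qquad F'(1) = -(1 - b_\ast) = b_\ast - 1,
\]
which is non-zero since $b_\ast < 0$. This gives simplicity without appealing to any multiplicity-tracking version of Lemma \ref{lem:exponential}. There is no serious obstacle here: the main content is the observation that the scalar-gain structure places $(\alpha,\beta)$ precisely on the boundary curve $R$ of the stability region, rather than somewhere in its interior or exterior. The rest is bookkeeping through the bijection and a one-line derivative check.
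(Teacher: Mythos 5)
Your proof is correct and follows essentially the same route as the paper: identify $(\alpha,\beta)=(b_\ast,-b_\ast)$ as lying on the boundary half-line $R$, invoke Proposition \ref{prop: chap 11}(1), pull the conclusion back through the bijection of Lemma \ref{lem:exponential}, and confirm simplicity by a direct derivative computation. The only cosmetic difference is that you factor $F'(z)=-e^{b_\ast(1-z)}(1-b_\ast z)$ while the paper leaves the derivative unfactored, but both give $F'(1)=b_\ast-1\neq 0$.
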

\begin{proof}
Equation \eqref{eq:trivial ce} is of the form
\[ 1 - ze^\alpha e^{\beta z} = 0 \]
with 
\[ \alpha(b_\ast) = b_\ast, \qquad \beta(b_\ast) = -b_\ast. \]
For $b_\ast < 0$, the point $(\alpha(b_\ast), \beta(b_\ast))$ lies on the line $R$ as defined in \eqref{eq:R}. Therefore, the equation
\[ -z + \alpha(b_\ast) + \beta(b_\ast) e^{-z} = 0\]
has a solution $z = 0$ 
and all other solutions lie in the strict left half plane. Lemma \ref{lem:exponential} then implies that equation \eqref{eq:trivial ce} has a solution $z = 1$ 
and that all other roots of \eqref{eq:trivial ce} lie strictly outside the unit circle. 

To prove that $z = 1$ is a simple solution of \eqref{eq:trivial ce} for $b_\ast < 0$, we compute
\begin{align*}
\left. \frac{d}{dz}\right|_{z=1} \left( 1-z e^{b_\ast(1-z)} \right) &= \left. - e^{b_\ast(1-z)} + b_\ast z e^{b_\ast(1-z)} \right|_{z = 1}  \\ 
&= -1 + b_\ast \neq 0
\end{align*}
for $b_\ast < 0$. 
\end{proof}

\begin{cor}[Case $\mu_\ast < -1$] \label{cor:unstable ce} Consider the equation
\begin{equation}\label{eq:unstable ce}
1 - \mu_\ast z e^{b_\ast(1-z)} = 0
\end{equation}
and for fixed $\mu_\ast$ define the set 
\begin{equation} 
I(\mu_\ast) := \{b_\ast \in \mathbb{R} \mid \mbox{all solutions of \eqref{eq:unstable ce} lie strictly outside the  unit circle} \}. \label{eq:set I}
\end{equation}
Then the following two statements hold: 
\begin{enumerate}
\item If $-e^2 < \mu_\ast < -1$, the set $I(\mu_\ast)$
is a \emph{non-empty} open interval with $I(\mu_\ast) \subseteq (-\infty, 0)$. 
\item If $\mu_{\ast, 1}, \ \mu_{\ast, 2}$ are two real numbers satisfying  $-e^2 < \mu_{\ast, 1} < \mu_{\ast, 2} < -1$, 
then it holds that 
\begin{equation} \label{eq:nesting}
I(\mu_{\ast, 1}) \subseteq I(\mu_{\ast, 2}).
\end{equation}
\end{enumerate}
\end{cor}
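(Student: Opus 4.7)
The strategy is to reduce the equation \eqref{eq:unstable ce} to the classical equation \eqref{eq:ce chap11} using Lemma \ref{lem:exponential}, and then to read off the answer from Proposition \ref{prop: chap 11}.

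Since $\mu_\ast < 0$ and $z = 0$ is never a root of \eqref{eq:unstable ce}, the substitution $w = -z$ transforms \eqref{eq:unstable ce} into
\[ 1 - w\, e^{\alpha} e^{\beta w} = 0, \qquad \alpha = \log|\mu_\ast| + b_\ast, \quad \beta = b_\ast. \]
Lemma \ref{lem:exponential} then gives a bijection between the non-zero roots $w$ of this equation and the roots $\lambda$ of $-\lambda + \alpha + \beta e^{-\lambda} = 0$ via $w = e^{-\lambda}$, equivalently $z = -e^{-\lambda}$. Under this bijection, $|z| > 1 \Leftrightarrow |e^{-\lambda}| > 1 \Leftrightarrow \re \lambda < 0$, so $b_\ast \in I(\mu_\ast)$ if and only if every root $\lambda$ of the transformed equation lies strictly in the open left half-plane. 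By Proposition \ref{prop: chap 11}, this happens precisely when $(\log|\mu_\ast| + b_\ast,\ b_\ast)$ lies in the interior of the stability region $S$.

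Set $c := \log|\mu_\ast|$, so that $c \in (0, 2)$. Statement~1 now reduces to a geometric analysis of where the line
\[ L_c := \{\, (c + b,\, b) : b \in \mathbb{R} \,\} \]
meets $\mathrm{int}(S)$. The line $L_c$ meets $R$ uniquely at $b = -c/2 \in (-1, 0)$. On $C$, a direct computation gives $\alpha - \beta = \omega \cot(\omega/2)$; this quantity is strictly decreasing from $2$ to $0$ on $(0, \pi)$, which follows from the elementary inequality $\sin(2u) < 2u$ for $u > 0$. Hence for every $c \in (0, 2)$ there is a unique $\omega_c \in (0, \pi)$ with $\omega_c \cot(\omega_c / 2) = c$, and $L_c$ meets $C$ at a unique point with $b = -\omega_c / \sin \omega_c < -1$. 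A short connectedness argument (using that crossing either $R$ or $C$ from $\mathrm{int}(S)$ exits $S$ by Proposition \ref{prop: chap 11}) identifies
\[ I(\mu_\ast) = \left( -\frac{\omega_c}{\sin \omega_c},\, -\frac{c}{2} \right), \]
a non-empty open interval contained in $(-\infty, 0)$.

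For statement 2, set $c_i := \log|\mu_{\ast, i}|$; the hypothesis $-e^2 < \mu_{\ast, 1} < \mu_{\ast, 2} < -1$ forces $c_2 < c_1$. Since $\omega \cot(\omega/2)$ is strictly decreasing on $(0, \pi)$, one has $\omega_{c_2} > \omega_{c_1}$; and since $\omega \mapsto \omega / \sin \omega$ is strictly increasing on $(0, \pi)$ (elementary, using $\tan \omega > \omega$ on $(0, \pi/2)$ and the sign of $\cos\omega$ on $[\pi/2, \pi)$), this gives $-\omega_{c_2}/\sin \omega_{c_2} < -\omega_{c_1}/\sin \omega_{c_1}$. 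Combined with $-c_2/2 > -c_1/2$, this shows that $I(\mu_{\ast, 2})$ extends $I(\mu_{\ast, 1})$ on both endpoints, yielding $I(\mu_{\ast, 1}) \subseteq I(\mu_{\ast, 2})$. The main obstacle I anticipate is the geometric step in statement 1 certifying that the intersection of $L_c$ with $\mathrm{int}(S)$ is exactly the bounded open arc between the two boundary crossings and nothing else; this is routine given Proposition \ref{prop: chap 11}, but it is the only step in the plan that goes beyond a direct calculation.
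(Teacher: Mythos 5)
Your proposal is correct and takes essentially the same route as the paper: both substitute $z \mapsto -z$ to bring \eqref{eq:unstable ce} into the form $1 - w\,e^{\alpha}e^{\beta w} = 0$ with $\alpha = \log|\mu_\ast| + b_\ast$ and $\beta = b_\ast$, then pass via Lemma \ref{lem:exponential} to the stability region $S$ of Proposition \ref{prop: chap 11} and trace the slope-one line in the $(\alpha,\beta)$-plane to locate $I(\mu_\ast)$. The one place you genuinely diverge is statement 2: the paper argues that horizontally translating a point of $S$ to the left (from $\lambda_{\ast,1}+b_\ast$ to $\lambda_{\ast,2}+b_\ast$) keeps it inside $S$, appealing to Figure \ref{fig:two paths}, whereas you compute the two boundary-crossing values of $b_\ast$ explicitly in terms of $\omega\cot(\omega/2)$ and $\omega/\sin\omega$ and use their monotonicity to show the intervals nest endpoint by endpoint. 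Your version pins down the left endpoint $\bar{b} = -\omega_c/\sin\omega_c$ that the paper leaves implicit, and replaces the picture-based translation argument with explicit elementary estimates; it is, if anything, the sharper write-up, at the cost of the extra calculus you supply and of the same informal connectedness step (``$L_c \cap \mathrm{int}(S)$ is exactly the arc between the two crossings'') that the paper also relies on its figure to justify.
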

\begin{proof}
To prove the first statement of the lemma, let $\mu_\ast$ be a real number with $-e^2 < \mu_\ast < -1$. 
A complex number $\mu \in \mathbb{C}$ is a solution of \eqref{eq:unstable ce} if and only if $\nu : = - \mu$ is a solution of 
\begin{equation}\label{eq:ce nu}
1 - z [-\mu_\ast] e^{b_\ast (1 + z)} = 0 . 
\end{equation}
Hence all solutions of \eqref{eq:unstable ce} lie strictly outside the unit circle if and only if all solutions of \eqref{eq:ce nu} lie strictly outside the unit circle. 
Since by assumption $-e^2 < \mu_\ast < -1$, we can write $- \mu_{\ast} = e^{\lambda_\ast}$ with $0 < \lambda_\ast < 2$. So \eqref{eq:ce nu} is of the form 
\[
1-ze^{\alpha} e^{\beta z} = 0
\]
with 
\begin{equation}\label{eq:path}
\alpha(b_\ast) = \lambda_\ast + b_\ast, \qquad \beta(b_\ast) = b_\ast. 
\end{equation}
By Lemma \ref{lem:exponential}, the solutions of \eqref{eq:ce nu} lie strictly outside the unit circle 
if and only if all solutions of 
\begin{equation} \label{eq:ce alpha}
-z + \alpha(b_\ast) + \beta(b_\ast)e^{-z} = 0
\end{equation}
lie in the strict left half of the complex plane. But by Proposition \ref{prop: chap 11}, all roots of \eqref{eq:ce alpha} lie in the strict left half of the complex plane if and only if the point \eqref{eq:path} lies inside the region $S$ (as defined in Proposition \ref{prop: chap 11}). So to determine the elements of the set $I(\mu_\ast)$, we have to find the values of $b_\ast$ for which the point \eqref{eq:path} lies in the set $S$ as defined in Proposition \ref{prop: chap 11}.  

If we view \eqref{eq:path} as a path parametrized by $b_\ast$, then for $b_\ast = -\lambda_\ast/2$ this path crosses the line $R = \{(\alpha, \beta) \mid \alpha + \beta = 0, \ \alpha < 1 \}$ in the point
\[ (\alpha, \beta) = \left(\frac{\lambda_\ast}{2}, - \frac{\lambda_\ast}{2}\right), \]
and there exists 
a $\bar{b} < - \lambda_\ast/2$ such that for $b_\ast$ in the interval
\[\left(\bar{b},  - \frac{\lambda_\ast}{2}\right) \]
the path \eqref{eq:path} lies inside the set $S$ as defined in Proposition \ref{prop: chap 11}; see Figure \ref{fig:path}. So the set $I(\mu_\ast)$ defined in \eqref{eq:set I} is of the form $I(\mu_\ast) = (\bar{b}, -\lambda_\ast/2)$, which proves the first statement of the lemma.  

\medskip

To prove the second statement of the lemma, we let $\mu_\ast^1, \mu_\ast^2$ be two real numbers with \[-e^2 < \mu_{\ast, 1} < \mu_{\ast,2} < -1 \] (so $\mu_{\ast, 1}$ has larger modulus than $\mu_{\ast, 2}$) and write 
\[ - \mu_{\ast, 1} = e^{\lambda_{\ast, 1}}, \qquad - \mu_{\ast, 2} = e^{\lambda_{\ast,2}} \]
with $\lambda_{\ast, 1} > \lambda_{\ast, 2}$. Now let $b_\ast \in I(\mu_{\ast, 1})$, which means that the point 
\begin{equation} \label{eq:point1}
(\alpha, \beta) = (\lambda_{\ast, 1} + b_\ast, b_\ast) 
\end{equation}
lies inside the region $S$. Since $\lambda_{\ast, 2} < \lambda_{\ast, 1}$, we can obtain the point 
\begin{equation} \label{eq:point2}
(\alpha, \beta) = (\lambda_{\ast, 2} + b_\ast, b_\ast) 
\end{equation}
by translating the point \eqref{eq:point1} horizontally to the left. But we then see from Figure \ref{fig:two paths} that if \eqref{eq:point1} lies inside the region $S$, then \eqref{eq:point2} lies inside the region $S$ as well, and hence $b_\ast \in I(\mu_{\ast, 2})$. So it holds that $I(\mu_{\ast, 1}) \subseteq I(\mu_{\ast, 2})$, as claimed. 
\end{proof}

\vspace{2 \baselineskip}

\begin{minipage}[t]{0.45\textwidth}
\fbox{
\includegraphics[width = 1.05\textwidth]{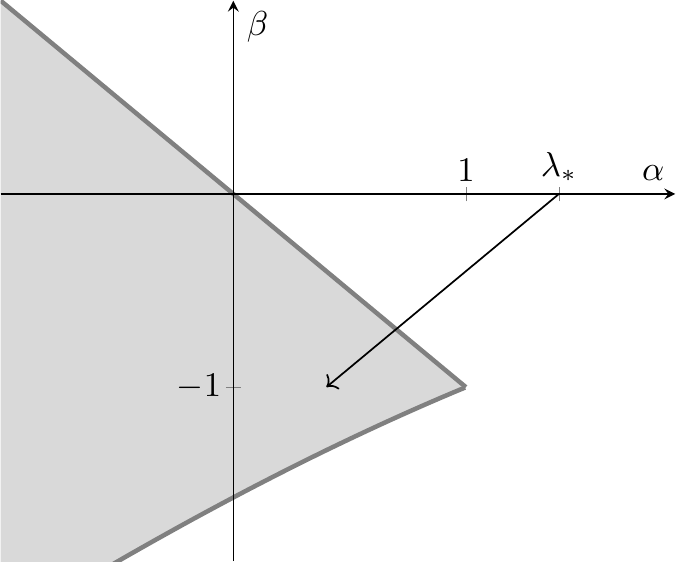}}
\captionof{figure}{The stability region $S$ (grey) together with the path defined by \eqref{eq:path} for $b_\ast \leq 0$ (black). The path defined by \eqref{eq:path} enters the region $S$ at the point $(-\lambda_\ast/2, \lambda_\ast/2)$.}
\label{fig:path}
\end{minipage} 
\hfill
\begin{minipage}[t]{0.45\textwidth}
\fbox{
\includegraphics[width = 1.05\textwidth]{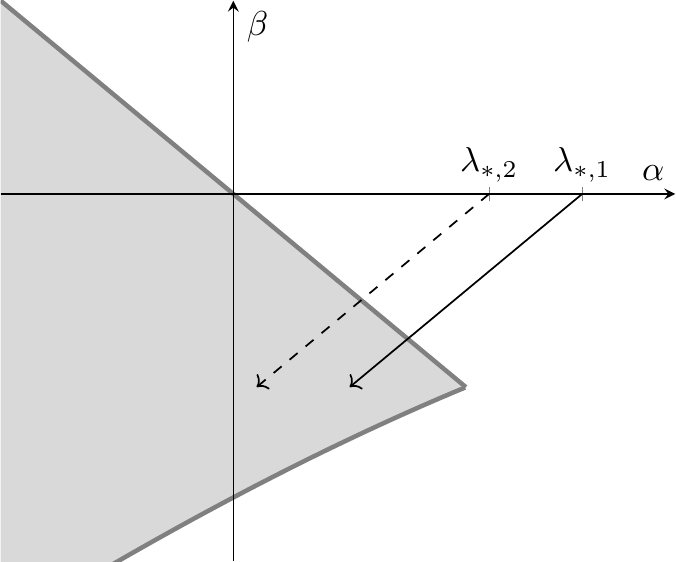}}
\captionof{figure}{The stability region $S$ (grey) together with the paths defined by \eqref{eq:point1} for $b_\ast \leq 0$ (black solid) and \eqref{eq:point2} with $b_\ast \leq 0$ (black dashed). The path \eqref{eq:point2} is a left translate of the path \eqref{eq:point1}; if for a given value of $b_\ast$ the point \eqref{eq:point1} lies inside the region $S$, then for that value of $b_\ast$ the point \eqref{eq:point2} lies inside the region $S$ as well. }
\label{fig:two paths}
\end{minipage}

\vspace{2\baselineskip}

In the case where $\mu_\ast \in \mathbb{C} \backslash \mathbb{R}$, the equation \eqref{eq:factor} can be only brought into the form $1-ze^{\alpha}e^{\beta z} =0$ by choosing $\alpha$ to be complex. So in this case, Proposition \ref{prop: chap 11} does not give information on the roots of \eqref{eq:factor}. However, for $\left| \mu_\ast \right| < 1$, we can analyze the roots by direct estimates. 

\begin{cor}[Case $\left|\mu_\ast \right| < 1$, cf. statement (v) in Lemma 7.4 in \cite{Miyazaki11}] \label{cor:stable ce}
Let $\mu_\ast \in \mathbb{C}, \ \left| \mu_\ast \right| < 1$ and $b_\ast < 0$. Then all solutions of 
\begin{equation}\label{eq:stable ce}
1 - \mu_\ast z e^{b_\ast(1-z)} = 0
\end{equation}
lie strictly outside the unit circle. 
\end{cor}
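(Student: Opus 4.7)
My plan is to rule out solutions with $|z| \le 1$ by a direct modulus estimate, exploiting crucially the signs of $b_\ast$ and of $1-\re z$.

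Suppose, for contradiction, that $z \in \mathbb{C}$ is a solution of \eqref{eq:stable ce} with $|z| \le 1$. Rewriting the equation as $\mu_\ast z e^{b_\ast(1-z)} = 1$ and taking absolute values, the claim reduces to deriving a contradiction from
\[
|\mu_\ast|\,|z|\,\bigl|e^{b_\ast(1-z)}\bigr| = 1.
\]
Since $b_\ast$ is real, $|e^{b_\ast(1-z)}| = e^{b_\ast(1-\re z)}$. The key observation is that $|z| \le 1$ forces $\re z \le |z| \le 1$, so $1 - \re z \ge 0$; combined with the hypothesis $b_\ast < 0$, this yields $b_\ast(1-\re z) \le 0$ and hence $|e^{b_\ast(1-z)}| \le 1$.

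Plugging in, and using $|z| \le 1$ together with $|\mu_\ast| < 1$, we obtain
\[
|\mu_\ast|\,|z|\,\bigl|e^{b_\ast(1-z)}\bigr| \le |\mu_\ast| < 1,
\]
which contradicts the identity above. Therefore every solution $z$ of \eqref{eq:stable ce} must satisfy $|z| > 1$.

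I do not anticipate any real obstacle here: the statement is a straightforward modulus bound, and the only two ingredients used are the reality of $b_\ast$ (so that $|e^{b_\ast(1-z)}|$ depends only on $\re z$) and the elementary inequality $\re z \le |z|$. In particular, no appeal to Lemma \ref{lem:exponential} or Proposition \ref{prop: chap 11} is needed, which is consistent with the fact that these tools were tailored to the case of real $\mu_\ast$.
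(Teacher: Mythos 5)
Your proof is correct and takes essentially the same route as the paper: a direct modulus estimate on the defining equation, using $b_\ast<0$ together with the elementary bound $\re z \le |z| \le 1$ to force $|e^{b_\ast(1-z)}|\le 1$ and hence $|\mu_\ast||z||e^{b_\ast(1-z)}| \le |\mu_\ast| < 1$, a contradiction. The paper phrases the estimate through $|e^{-b_\ast\mu}|\le e^{-b_\ast|\mu|}$ rather than computing $|e^{b_\ast(1-z)}|=e^{b_\ast(1-\re z)}$ exactly, but the two are the same inequality in slightly different dress.
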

\begin{proof}
Let $\mu \in \mathbb{C}$ be a solution of \eqref{eq:stable ce}, then 
\[ 1 = \left| \mu_\ast \right| \left| \mu \right|  e^{b_\ast}  \left| e^{-b_\ast \mu} \right|. \]
Because $b_\ast < 0$, it holds that $\left|- b_\ast \mu \right| = -b_\ast \left| \mu \right|$ and hence $ \left| e^{-b_\ast \mu} \right| \leq e^{-b_\ast \left|  \mu \right|}$. So $\mu$ satisfies the estimate
\begin{equation}\label{eq:estimate stable}
1 \leq  \left| \mu_\ast \right| \left| \mu \right| e^{b_\ast} e^{-b_\ast \left| \mu \right|}. 
\end{equation}
Now assume by contradiction that $\left| \mu \right| \leq 1$. Since $b_\ast < 0$, we can estimate the right hand side of \eqref{eq:estimate stable} as
\begin{align*}
\left| \mu_\ast \right| \left| \mu \right| e^{b_\ast} e^{-b_\ast \left| \mu \right|}&\leq \left| \mu_\ast \right| e^{b_\ast} e^{-b_\ast} \\ 
& \leq \left| \mu_\ast \right| < 1.
\end{align*}
But this contradicts the estimate \eqref{eq:estimate stable}. We conclude that if $\mu$ is a solution of \eqref{eq:unstable ce}, then $\left| \mu \right| \geq 1$. 
\end{proof}

Having assembled all the necessary ingredients, we now prove Theorem \ref{thm:main result}. 

\begin{proof}[Proof of Theorem \ref{thm:main result}]

Let $Y(t) \in \mathbb{R}^{N \times N}$ be the fundamental solution of the ODE 
\[ \dot{y}(t) = f'(x_\ast(t)) y(t) \]
with $Y(0) = I$. We denote by $\mu_1, \ldots, \mu_N$ the (possibly non-distinct) eigenvalues of the matrix $h^{-1} Y(\theta_h) \in \mathbb{R}^{N \times N}$, i.e. 
\[ \{ \mu_1, \ldots, \mu_N \} = \sigma \left(h^{-1} Y(\theta_h)\right). \]
Moreover, we let $U(t, s), \ t \geq s$, be the family of solution operators associated to the DDE 
\[ \dot{y}(t) = f'(x_\ast(t)) y(t) + b \left[y(t) - hy(t-\theta_h)\right] \]
and we let 
\[ U_h = h^{-1} U(\theta_h, 0) \]
be the twisted monodromy operator. 

By Proposition \ref{prop:spectral stability}, $x_\ast$ is stable solution of 
\begin{equation} \label{eq:control repeat}
\dot{x}(t) = f(x(t)) + b \left[x(t) - hx(t-\theta_h)\right]
\end{equation}
if the trivial eigenvalue $1 \in \sigma_{pt}(U_h)$ is algebraically simple and all other eigenvalues of $U_h$ lie strictly inside the unit circle. By Corollary \ref{cor:cm scalar control}, a non-zero complex number $\mu^{-1}$ is an eigenvalue of $U_h$ if and only if $\mu$ is a zero of the function 
\begin{equation} \label{eq:cf repeat}
d(z) : = \prod_{j = 1}^N\left(1-z \mu_j e^{b(1-z)\theta_h}\right), 
\end{equation}
and the trivial eigenvalue $1 \in \sigma(U_h)$ is algebraically simple if $z = 1$ is a simple zero of \eqref{eq:cf repeat}. 
So if there exists an open interval $I \subseteq \mathbb{R}$ such that for $b \in I$, $z =1$ is a simple zero of \eqref{eq:cf repeat} and all other zeroes are strictly \emph{outside} the unit circle, then the statement of the theorem follows.  

\medskip

We first consider the case in which $h^{-1} Y(\theta_h)$ has at least one eigenvalue strictly outside the unit circle. Denote by $d$ the number of eigenvalues of $h^{-1} Y(\theta_h)$ strictly outside the circle; then we can relabel the eigenvalues $\mu_1, \ldots, \mu_N$ in such a way that 
\begin{enumerate}
\item the eigenvalue $\mu_1$ has largest modulus, i.e. $\left| \mu_j \right| \leq \left| \mu_1 \right|$ for $2 \leq j \leq N$;
\item the eigenvalues $\mu_1, \ldots, \mu_d$ lie strictly outside the unit circle; 
\item the eigenvalues $\mu_{d+1}, \ldots, \mu_{N-1}$ lie strictly inside the unit circle;
\item $\mu_{N} = 1$. 
\end{enumerate}
Next we consider the equation 
\begin{equation} \label{eq:ce repeat}
1 - \mu_1 z e^{b(1-z)\theta_h} = 0
\end{equation}
and define the set 
\begin{equation} 
I :=  \{b \in \mathbb{R} \mid \mbox{all solutions of \eqref{eq:ce repeat} lie strictly outside the unit circle} \}. \label{eq:defn I}
\end{equation}
So with the notation of Corollary \ref{cor:unstable ce}, and with $I(\mu_1)$ as defined in \eqref{eq:set I} for $\mu_\ast =\mu_1$, we have that 
\[ I = \{b \in \mathbb{R} \mid b_\ast: = b \theta_h \in I(\mu_1) \} \]
and Corollary \ref{cor:unstable ce} implies that $I$ is a nonempty open interval with $I \subseteq (-\infty, 0)$. We now claim that for $b \in I$, the zero $z = 1$ of \eqref{eq:cf repeat} is algebraically simple, and all other zeroes of \eqref{eq:cf repeat} lie strictly outside the unit circle. 

If $1 \leq j \leq d$, i.e. if $\mu_j$ lies strictly outside the unit circle, then 
the second assumption of Theorem \ref{thm:main result} implies that $-e^2 < \mu_j < -1$. Moreover, since $\left| \mu_1 \right| \geq \left| \mu_j \right|$, it holds that $-e^2 < \mu_1 \leq \mu_j < -1$. So in particular, the second statement of Corollary \ref{cor:unstable ce} implies that if $b \in I$ and $1 \leq j \leq d$, then all solutions of 
\[ 1-z \mu_j e^{b(1-z)\theta_h} = 0 \]
lie strictly outside the unit circle. 

If $b \in I$, then in particular $b < 0$ and also $b_\ast : = b \theta_h < 0$. So if $d+1 \leq j \leq N-1$, i.e. if $\mu_j$ lies strictly inside the unit circle, then Corollary \ref{cor:stable ce} implies that all solutions of 
\[ 1 - z \mu_j e^{b(1-z)\theta_h} = 0 \]
lie strictly outside the unit circle. Moreover, if $j = N$, i.e. for $\mu_N = 1$, Corollary \ref{cor:trivial ce} implies that
\[ 1- z e^{b(1-z)\theta_h} = 0 \]
has a solution $z = 1$, which is simple, and all its other solutions lie strictly outside the unit circle. 

We conclude that if $b \in I$, the zero $z = 1$ of \eqref{eq:cf repeat} is simple, and all other zeros of \eqref{eq:cf repeat} lie strictly outside the unit circle. In this case the statement of the theorem follows. 

\medskip

In the case where the matrix $h^{-1} Y(\theta_h)$ does not have a eigenvalue strictly outside the unit circle, the assumptions of the theorem imply that $h^{-1} Y(\theta_h)$ has an algebraically simple eigenvalue $1$ and all other eigenvalues lie strictly inside the unit circle (this in particular means that $x_\ast$ is a stable solution of the ODE \eqref{eq:ode}). In this case, Corollaries \ref{cor:trivial ce} and \ref{cor:stable ce} imply that for any $b < 0$, the zero $z = 1$ of \eqref{eq:cf repeat} is algebraically simple and all other zeroes lie strictly outside the unit circle. So also in this case the statement of the theorem follows.
\end{proof}

\section{Discussion and outlook} \label{sec:outlook}

A key ingredient in the proof of Theorem \ref{thm:main result} is the introduction of the \emph{twisted monodromy operator} (cf. equation \eqref{eq:twisted mon op}), which can be viewed as an adaptation of the monodromy operator to equivariant settings. The introduction of this twisted monodromy operator is crucial for two reasons.

Firstly, the properties of the twisted monodromy operator, and \emph{not} the properties of monodromy operator, determine whether stabilization via equivariant Pyragas control is possible. 
We see this in the statement of 
Theorem \ref{thm:main result}, 
which provides
clear conditions on the location of the spectrum of the twisted monodromy operator (namely on the negative real axis), while the location of the spectrum of the monodromy operator can either be on the positive or negative real axis. 
As another point in case, \cite{deWolff21} contains an invariance principle for non-equivariant Pyragas control in terms of eigenvalues of the monodromy operator; this result (and the consequences thereof)
apply verbatim to equivariant Pyragas control when we replace the monodromy operator by the twisted monodromy operator (see also \cite[Chapter 10]{proefschrift} for more precise statements). 

Secondly, Theorem \ref{thm:cm twisted} provides a $N$-dimensional characteristic matrix function for the twisted monodromy operator. However, we cannot so easily find a $N$-dimensional characteristic matrix function for the monodromy operator. 
This is because for equivariant Pyragas control, the time delay 
equals the time step in the spatio-temporal symmetry, but the time delay is a fraction of the full period. The existence of a characteristic matrix function for the twisted monodromy operator implies that we can compute its eigenvalues as roots of a scalar valued equation, which is advantageous from a computational point of view. 

\medskip

Lemma \ref{lem:exponential} shows that eigenvalues of the twisted monodromy operator of \eqref{eq:linearized control} are exponentially related to roots of equations of the form
\begin{equation} \label{eq:ce outro}
z = \alpha + \beta e^{-z}
\end{equation}
with $\alpha, \beta \in \mathbb{R}$. Equation \eqref{eq:ce outro} is in turn related to the linear, autonomous DDE
\begin{equation} \label{eq:dde outro}
\dot{y}(t) = \alpha y(t) + \beta y(t-1) \qquad \mbox{with } y(t) \in \mathbb{R},
\end{equation}
because roots of \eqref{eq:ce outro} are exactly the eigenvalues of the generator of the semiflow to \eqref{eq:dde outro} \cite[Chapter IV]{Diekmann95}. 
So Lemma \ref{lem:exponential} shows that eigenvalues of the twisted monodromy operator of \eqref{eq:linearized control} are exponentially related to eigenvalues associated with a linear, autonomous DDE of the form \eqref{eq:dde outro}. 

This is reminiscent of the situation in ODE, 
where there is a time-periodic transformation that transforms the periodic ODE
\begin{equation} \label{eq:ode time}
\dot{y}(t) = A(t) y(t), \qquad A(t+1) = A(t) \in \mathbb{R}^{N \times N}
\end{equation}
to an autonomous ODE of the form 
\begin{equation} \label{eq:ode constant}
\dot{y}(t) = B y(t), \qquad B \in \mathbb{R}^{N \times N};
\end{equation}
through this transformation, 
eigenvalues of the monodromy operator of \eqref{eq:ode time} are exponentially related to eigenvalues of the generator of \eqref{eq:ode constant}. The crucial difference with DDE, however, is that there is in general \emph{no} time-periodic transformation that transforms the DDE \eqref{eq:linearized control} to an autonomous DDE of the form \eqref{eq:dde outro}; cf \cite[Chapter XIII]{Diekmann95}.  
So it surprising that, although there is no relation between the solution operators of \eqref{eq:linearized control} and \eqref{eq:dde outro}, there \emph{is} a relation between their spectra.

It is natural to ask whether this is true for a larger class of equations, e.g. whether the spectrum associated with a time-periodic DDE of the form
\begin{equation} \label{eq:point delay}
 \dot{y}(t) = A(t) y(t) + B(t) y(t-1), \qquad \mbox{with } A(t+1) = A(t),  \ B(t+1) = B(t) \mbox{ and }  A(t), \ B(t) \in \mathbb{R}^{N \times N}
\end{equation}
is related to the spectrum of an autonomous equation, possibly with a distributed delay. Since the eigenvalue problem of autonomous DDE is relatively well understood (see e.g. \cite{Yanchuk22} for recent developments in this topic), such a correspondence would contribute to a further understanding of the eigenvalue problem associated to \eqref{eq:point delay}, and in particular to a further analytical understanding of Pyragas control. 

\bibliographystyle{alpha}
\bibliography{waves} 

\end{document}